\documentclass[11pt]{article}
\usepackage[margin=1in]{geometry}
\usepackage[utf8]{inputenc}
\usepackage[english]{babel}
\usepackage{amsmath,amsfonts,amssymb,amsthm,color,mathrsfs,mathtools,enumitem}
\usepackage[hidelinks]{hyperref}

\theoremstyle{definition}
\newtheorem{theorem}{Theorem}[section]
\newtheorem{proposition}[theorem]{Proposition}
\newtheorem{lemma}[theorem]{Lemma}
\newtheorem{corollary}[theorem]{Corollary}
\theoremstyle{remark}
\newtheorem{remark}[theorem]{Remark}
\numberwithin{equation}{section}

\newcommand{\mbb}{\mathbb}
\newcommand{\mcal}{\mathcal}
\newcommand{\mscr}{\mathscr}
\newcommand{\msf}{\mathsf}
\newcommand{\im}{\mathrm{i}}
\newcommand{\Z}{\mbb{Z}}
\newcommand{\R}{\mbb{R}}
\newcommand{\C}{\mbb{C}}
\newcommand{\del}{\partial}
\newcommand{\grad}{\nabla}
\newcommand{\abs}[1]{\left\lvert #1\right\rvert}
\newcommand{\norm}[1]{\left\lVert #1\right\rVert}

\title{Linear and Nonlinear Instability in Two-Dimensional Inertial Magnetohydrodynamics}
\author{Hyungjun Choi}
\date{\today}

\begin{document}
\maketitle

\begin{abstract}
We study spectral and nonlinear instability for some steady states of a two-dimensional inertial magnetohydrodynamic system on the torus. For equilibria with aligned velocity and magnetic fields, the linearized problem reduces to a matrix recurrence of Fourier coefficients, and a stable matrix continued fraction produces a smooth eigenmode. For a purely magnetic family, the square of the linearized operator admits a compact self-adjoint reduction on each transverse Fourier block that classifies the entire unstable spectrum. In an invariant symmetry class, we identify the unstable branches and a spectral gap below the leading eigenspace. The spectral gap then permits an approximate-trajectory construction that proves nonlinear instability on the logarithmic time scale. Our study is motivated by the physics literature, where the nonlinear instability of purely magnetic tearing modes is invoked to explain magnetic reconnection.
\end{abstract}

\section{Introduction}
On the flat torus $\mbb T^2=(\R/2\pi\Z)^2$, we study the inertial magnetohydrodynamic system
\begin{equation} \label{eq:imh-system}
\begin{aligned} \del_t\omega+\grad^\perp\psi\cdot\grad\omega+\grad^\perp\Phi\cdot\grad F&=0,\\ \del_tF+\grad^\perp\psi\cdot\grad F&=0,\\ \Delta\psi=\omega,\qquad (1-\Delta)\Phi&=F, \end{aligned}
\end{equation}
where $\grad^\perp f=(-\del_{x_2}f,\del_{x_1}f)$, $\psi$ is the fluid stream function, $\omega$ is the vorticity, and $\Phi$ is the magnetic flux function. The generalized flux $F=(1- d_e^2 \Delta)\Phi$ incorporates electron inertia. Here, the electron skin depth $d_e$ is normalized to one. This system is a two-dimensional reduction of MHD with electron inertia in a thin slab. In the underlying slab description, $F$ is proportional to the electron canonical momentum in the ignorable direction. See \cite{LingamMorrisonTassi2015} and references therein for further discussion on derivation of this model.

Magnetic reconnection and relaxation motivate the study of inertial MHD. In resistive magnetohydrodynamics, reconnection is tied to magnetic diffusion, but laboratory and astrophysical plasmas can reconnect on time scales too short to be explained by collisional resistivity alone~\cite{Taylor1986}. In models with electron inertia, the transported flux $F$ differs from the magnetic flux $\Phi$~\cite{SchepPegoraroKuvshinov1994,AbdelhamidKawazuraYoshida2015}. Since the level sets of $\Phi$ describe planar magnetic field lines, these lines need not move with the fluid even though $F$ is transported.

Ottaviani and Porcelli studied growing perturbations associated with magnetic reconnection (tearing modes) in this model~\cite{OttavianiPorcelli1993}. They found accelerating growth and narrowing current layers during early nonlinear evolution, in a regime where the electron skin depth is small relative to the equilibrium scale. In other related models, conservation laws help explain thin current and vorticity layers~\cite{CafaroEtAl1998} and the eventual saturation of perturbation growth~\cite{GrassoEtAl2001}. Other work develops a Hamiltonian formulation~\cite{TassiEtAl2008} and studies rapid nonlinear growth with electron-temperature effects~\cite{HirotaHattoriMorrison2015}. These studies motivate our instability analysis.

This article proves spectral instability for a family of equilibria with aligned velocity and magnetic fields, and spectral and nonlinear instability for a purely magnetic sinusoidal family of \eqref{eq:imh-system}. Our nonlinear theorem establishes a fixed order-one escape from the equilibria in a Sobolev norm, not magnetic topology change or current-sheet formation; connecting this instability directly to reconnection remains an open problem.

Constantin and Hu~\cite{ConstantinHu2026} studied a related planar inertial model with the additional term $\grad^\perp\Phi\cdot\grad\omega$ on the left-hand side of the $F$ equation. They proved global regularity and constructed solutions with magnetic topology change without resistivity. Their model transports the two fields $F\pm\omega$ with stream functions $\psi\pm\Phi$. This particular transport structure is absent for \eqref{eq:imh-system}.

Liao, Lin, and Zhu~\cite{LiaoLinZhu2026} studied the stability and instability of Kelvin--Stuart magnetic islands, which are purely magnetic steady states of planar ideal MHD. For these magnetic islands, they prove spectral and nonlinear orbital stability for co-periodic perturbations. They also prove linear instability for perturbations with twice the equilibrium period. The latter gives a rigorous linear form of coalescence instability, associated with the tendency of neighboring islands to merge \cite{PriestForbes2000}.

\subsection{Equilibrium families and main results}

We analyze two families of one-dimensional steady states. The first carries aligned velocity and magnetic fields,
\[ \psi_\ast=\sin(kx_1),\qquad \Phi_\ast=a\sin(kx_1), \]
where $k\geq2$ is an integer and $a\in\R$. The second is the purely magnetic family
\[ \psi_\ast=0,\qquad \Phi_\ast=\sin(kx_1), \]
where $k\geq1$ is an integer. In either case, $\psi_\ast$ and $\Phi_\ast$ are Laplacian eigenfunctions with the common eigenvalue $-k^2$. Thus $\omega_\ast=\Delta\psi_\ast$ and $F_\ast=(1-\Delta)\Phi_\ast$ are also constant multiples of $\sin(kx_1)$, and all nonlinear Poisson-bracket terms vanish identically.

The main theorems may be summarized as follows.
\begin{enumerate}[label=\textup{(\roman*)},leftmargin=*]
\item Theorem \ref{thm:main-inertial-mhd-instability} proves that every aligned equilibrium with $k\geq2$ and $a^2(1+k^2)<2$ has a positive real eigenvalue and a smooth Fourier eigenfunction. 
\item Theorem \ref{thm:full-unstable-spectrum} and Proposition \ref{prop:sobolev-semigroup} describe the unstable and Fredholm essential spectra of the purely magnetic family: the unstable spectrum consists of finitely many positive eigenvalues obtained from negative eigenvalues of compact self-adjoint operators, and the Fredholm essential spectrum is $\im[-k\sqrt{1+k^2},k\sqrt{1+k^2}]$. Within the specified symmetry class, Proposition \ref{prop:symmetry-branches}, Proposition \ref{prop:continued-fraction}, and Theorem \ref{thm:spectral-gap} give exactly one simple positive branch for each $1\leq n<k$, a scalar continued-fraction formula with quantitative bounds, and a positive gap below the leading eigenspace; for $k=1$ there is no growing spectral value.
\item Theorem \ref{thm:nonlinear-instability} proves nonlinear instability of the purely magnetic equilibrium for every $k\geq2$. For each integer $s\geq4$, an initial perturbation of size $\epsilon$ along a normalized leading eigenfunction in the invariant symmetry class reaches size comparable to a fixed small $\delta$ in $H^{s-1}\times H^s$ at time $T_\epsilon=\Lambda_k^{-1}\log(\delta/\epsilon)$, where $\Lambda_k$ is the largest growth rate in that class and $\delta$ is independent of $\epsilon$.
\end{enumerate}

\subsection{Proof strategies}

The proof of the first result follows the continued-fraction method for Fourier coefficients originating in the work of Meshalkin and Sinai on Kolmogorov flow~\cite{MeshalkinSinai1961}. A normal-mode ansatz turns the linearized equations into a nearest-neighbor recurrence for Fourier coefficients. In the hydrodynamic case $a=0$ this recurrence separates into scalar chains; for $a\neq0$ it becomes genuinely matrix-valued. We construct the stable subspace that generates the exponentially decaying tail, identify that subspace through a convergent matrix continued fraction, and compare its small- and large-growth-rate limits. A sign change of the resulting matching determinant then produces a positive eigenvalue. For Euler linear instability theory, see~\cite{Lin2003,VishikFriedlander1993,FriedlanderVishik1991,CaoLaboraColomboDolceVentura2026}. 

For the purely magnetic equilibrium, the absence of background velocity removes the diagonal transport terms. After fixing a transverse Fourier frequency, we relate the nonzero spectrum of the squared linearized operator on that Fourier block to the spectrum of a compact self-adjoint operator. Its negative index is computed directly from the Fourier symbol, giving the complete list of unstable branches. Uniform coercive estimates on high transverse modes exclude additional right-half-plane spectrum from the infinite direct sum, while a pseudodifferential argument identifies the essential spectrum. Within the symmetry class, reflection parity selects one scalar Fourier chain for each $1\leq n<k$; a scalar continued fraction gives quantitative eigenvalue bounds, and the finiteness of these branches yields a positive gap below the leading eigenspace.

To pass from spectral to nonlinear instability, we use the high-order approximate-trajectory method introduced by Grenier~\cite{Grenier2000}, in the broader tradition of nonlinear instability results for ideal fluids~\cite{FriedlanderStraussVishik1997,Lin2004}. The derivative loss in the quadratic transport term prevents a direct fixed-regularity perturbation argument. We instead combine semigroup bounds with the same exponential growth rate at each Sobolev regularity, tame energy estimates, and a recursively constructed approximate solution whose residual is of arbitrarily high order. A bootstrap then controls the exact solution until the leading mode reaches a fixed size $\delta$ at time $T_\epsilon=\Lambda_k^{-1}\log(\delta/\epsilon)$.

\section{Instability of aligned velocity--magnetic equilibria}

The first result supplies an unstable eigenmode for an aligned family. The case $a=0$ is hydrodynamic, whereas the case $a\neq0$ is genuinely coupled.

\begin{theorem} \label{thm:main-inertial-mhd-instability}
Let $k\geq2$ be an integer and let $a\in\R$ satisfy
\begin{equation} \label{eq:parameter-condition}
a^2(1+k^2)<2.
\end{equation}
Then the steady state $\psi_\ast=\sin(kx_1)$, $\Phi_\ast=a\sin(kx_1)$ is linearly unstable. More precisely, its linearization has a nonzero smooth eigenfunction in the transverse Fourier mode $e^{\im x_2}$ with a positive real eigenvalue.
\end{theorem}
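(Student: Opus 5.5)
We work with the normal-mode ansatz in the transverse mode $e^{\im x_2}$ and the Meshalkin--Sinai continued-fraction method described in the introduction. Linearizing \eqref{eq:imh-system} at $(\psi_\ast,\Phi_\ast)$ uses $\omega_\ast=-k^2\sin(kx_1)$ and $F_\ast=a(1+k^2)\sin(kx_1)$. We seek perturbations
\[
\omega=e^{\lambda t}e^{\im x_2}\sum_{m\in\Z}\omega_m e^{\im m x_1},\qquad
F=e^{\lambda t}e^{\im x_2}\sum_{m\in\Z}F_m e^{\im m x_1}.
\]
Since every background coefficient is a multiple of $\sin(kx_1)$, the equation decouples along the residue classes $m\equiv m_0 \pmod k$. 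We fix one class, say $m=m_0+jk$ with $j\in\Z$.

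The unknowns in this class are $\psi_m=-\omega_m/(m^2+1)$ and $\Phi_m=F_m/(2+m^2)$. Writing $\rho_m=m^2+1$ for the symbol of $-\Delta$ on this mode, the linearized equations become a nearest-neighbour recurrence
\[
\lambda X_j=A_j^{+}X_{j+1}+A_j^{-}X_{j-1},\qquad X_j=(\omega_{m_0+jk},F_{m_0+jk})\in\C^2 .
\]
The coefficients $A_j^{\pm}$ are $2\times2$ matrices with entries built from $k$, $(1+k^2)$, $a$ and the factors $1-k^2/\rho_m$ and $1-(1+k^2)/(1+\rho_m)$. For $a=0$ these matrices are diagonal and the recurrence splits into the scalar Meshalkin--Sinai chains.

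The core of the argument is to construct the decaying solutions on each half-lattice. For real $\lambda>0$ and $|j|$ large, the coefficients tend to constant-ratio form. The ratio $X_{j+1}\approx R_j X_j$ then satisfies a Riccati-type matrix recurrence
\[
R_{j-1}=\bigl(\lambda-A_j^{+}R_j\bigr)^{-1}A_j^{-}.
\]
We will show that the associated matrix continued fraction converges for all $\lambda>0$. The mechanism is that $|A_j^{\pm}|$ are bounded while $\lambda$ dominates asymptotically after a rescaling by $\rho_m$. This produces a stable subspace, the graph of $R_j$, spanned by solutions decaying faster than any power, indeed like $\lambda^{-|j|}/|j|!$-type. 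Such decay gives smoothness of the eigenfunction automatically.

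Doing the same for $j\to-\infty$ gives a second stable subspace. We then choose $m_0$ so that a low mode, for instance $m_0=0$ with the $j=0$ row as the matching row, or $m_0$ giving the symmetric pair $\pm1$, is where the two tails meet. Matching them yields a $2\times2$ determinant $D(\lambda)$, which is real-analytic in $\lambda>0$ and real by the reflection symmetry $x_1\mapsto -x_1$. An eigenvalue exists exactly where $D(\lambda)=0$.

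The remaining step is a sign change of $D$. As $\lambda\to\infty$ the continued fractions tend to $0$, so $D(\lambda)$ approaches the determinant of the identity-like central block, which has a definite sign. As $\lambda\to0^+$ we expand the continued fractions to leading order in $1/\lambda$ (or $\lambda$), obtaining a limiting finite quantity $D(0^+)$. We expect this limit to be computed through the first few coefficients involving $|m_0\pm k|$. This is where $k\ge2$ enters: the factor $1-k^2/\rho_{m}$ is negative at $m=0$, giving the Kolmogorov-type instability. The condition $a^2(1+k^2)<2$ enters as the requirement that the magnetic coupling, entering through $a^2(1+k^2)$ times the factors from $\Phi=F/(2+m^2)$ at $m=0$, does not overturn that sign.

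We expect this small-$\lambda$ limit to be the main obstacle. The matrix continued fraction must be shown to have a controlled limit, possibly with singular behaviour that requires rescaling $X_j$ by $\lambda^{j}$. The sign of the resulting determinant must then be computed explicitly, and this is where the precise threshold $a^2(1+k^2)<2$ must emerge. We would first do the computation with the truncation at the first neighbour and then bound the tail contributions uniformly. Once the sign change is established, the intermediate value theorem gives $\lambda>0$ with $D(\lambda)=0$, and the resulting nonzero decaying sequence is the smooth eigenfunction claimed in the theorem.
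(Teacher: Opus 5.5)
Your architecture matches the paper's: a normal mode in $e^{\im x_2}$ on the cosine chain $\cos(jkx_1)$ (your $m_0=0$ with $x_1$-parity), a $2\times2$ matrix recurrence, a stable matrix continued fraction, a matching determinant, and the intermediate value theorem. However, your tail analysis rests on a false mechanism, and the step you flag as the main obstacle is left with a method that cannot work.

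\textbf{The tail mechanism.} In \eqref{eq:mixed-recurrence} the coefficient matrices converge to the invertible matrix $M_\infty=\begin{pmatrix}1&a\\0&1\end{pmatrix}$, because the background velocity transports $\omega$ and $F$ at every scale. So $\lambda$ never dominates after rescaling, and there is no factorial decay. Decaying solutions decay only geometrically, at rate $\rho=(\sqrt{\mu^2+4}-\mu)/2$ with $\mu=2\lambda/k$; this still suffices for smoothness. Convergence of the continued fraction for fixed $\lambda>0$ comes instead from hyperbolicity of the limiting recurrence (roots $\rho$ and $-\rho^{-1}$) together with $\norm{M_j-M_\infty}\le Cj^{-2}$. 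A $\lambda$-domination contraction, with factor of order $\norm{M}^2/\mu^2$, is available only for large $\lambda$.

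\textbf{Transversality.} You also need the stable plane to be a graph over the matching-row data for every $\lambda>0$. Otherwise $D$ is undefined at some $\lambda$, or in a basis formulation its signs at the two ends are not comparable. The paper proves this from the identity $\mu\sum_{j\ge1}p_j^{\mathsf T}M_j^{-1}p_j=p_1^{\mathsf T}p_0$ for $p_j=M_jb_j$. It combines this with the uniform positive definiteness of the symmetric part of $M_j^{-1}$ (Lemma \ref{lem:positive-symmetric-part}), which is a second, independent use of $a^2(1+k^2)<2$.

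\textbf{The limit $\lambda\to0^+$ (the decisive gap).} Here both roots, $\rho\to1$ and $-\rho^{-1}\to-1$, reach the unit circle. The dichotomy degenerates and the decaying solution spreads over roughly $1/\mu$ Fourier modes. So the tail is never a small perturbation of a first-neighbour truncation.

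Concretely, truncating at $b_2=0$ gives $R\approx\mu^{-1}M_0$. That is the correct large-$\mu$ asymptotics, but it is wrong by a factor $\mu^{-1}$ at small $\mu$. The true stable ratio stays bounded, with $M_1R(\mu)\to M_0$: the stable plane converges to the neutral branch $p_{j+1}=p_j$ rather than the alternating one.

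This limit is what gives $D(0^+)\propto\det M_0=2(1-k^2)\{2-a^2(1+k^2)\}<0$, the quantity your heuristic points to. Proving it needs a different idea. Pass to $\xi_j=(p_j+p_{j-1})/2$ and $\eta_j=(p_j-p_{j-1})/2$. The graph transform $\eta_{j+1}=X\xi_{j+1}\mapsto\eta_j=\Gamma_j(X)\xi_j$ is then a contraction with factor only $1-c\mu$, uniformly in $j$, on a small ball containing $S_\infty=O(\mu)$; this again uses the coercivity of $M_j^{-1}$. Its defects $\Gamma_j(S_\infty)-S_\infty$ are $O(\mu j^{-2})$. The extra factor $\mu$ together with $j^{-2}$-summability compensates the degenerating contraction, giving $S_1=O(\mu)$ and hence $\norm{M_1R(\mu)-M_0}\le C\mu$.

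Without this argument and the transversality above, neither the existence of $D$ on all of $(0,\infty)$ nor its sign near $0^+$ is established. So the sign change, and with it the eigenvalue, is not proved.
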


\subsection{Linearization and the Fourier recurrence}

Let $(\omega,F)$ denote a perturbation and recover $(\psi,\Phi)$ from the elliptic equations in \eqref{eq:imh-system}. Write $u_\ast=\grad^\perp\psi_\ast=(0,k\cos(kx_1))$. Retaining the linear terms and simplifying gives
\begin{equation} \label{eq:mixed-linearized-system}
\begin{aligned} \del_t\omega+u_\ast\cdot\grad\left[(1+k^2\Delta^{-1})\omega+a\{1-(1+k^2)(1-\Delta)^{-1}\}F\right]&=0,\\ \del_tF+u_\ast\cdot\grad\left[F-a(1+k^2)\Delta^{-1}\omega\right]&=0. \end{aligned}
\end{equation}
We seek a normal mode
\begin{equation} \label{eq:mixed-normal-mode}
\begin{pmatrix}\omega\\F\end{pmatrix}=e^{\lambda t}e^{\im x_2}\sum_{j=0}^\infty v_j\cos(jkx_1),\qquad v_j\in\C^2.
\end{equation}
Substitution of \eqref{eq:mixed-normal-mode} into \eqref{eq:mixed-linearized-system} gives
\[ \lambda v_0=-\frac{\im k}{2}M_1v_1,\qquad \lambda v_j=-\frac{\im k}{2}(M_{j-1}v_{j-1}+M_{j+1}v_{j+1}),\quad j\geq1, \]
where
\[ M_j=\begin{pmatrix}1-\dfrac{k^2}{d_j}&a\left(1-\dfrac{1+k^2}{1+d_j}\right)\\[1.1em]\dfrac{a(1+k^2)}{d_j}&1\end{pmatrix},\qquad d_j=j^2k^2+1,\quad j\geq1. \]
The constant cosine mode has the exceptional matrix
\[ M_0=\begin{pmatrix}2(1-k^2)&a(1-k^2)\\2a(1+k^2)&2\end{pmatrix}. \]
Thus, with $\mu=2\lambda/k$ and $b_j=\im^{j-1}v_j$, the spectral problem becomes the real recurrence
\begin{equation} \label{eq:mixed-recurrence}
\mu b_0=-M_1b_1,\qquad \mu b_j=M_{j-1}b_{j-1}-M_{j+1}b_{j+1},\quad j\geq1.
\end{equation}
We will construct a real exponentially decaying solution of \eqref{eq:mixed-recurrence}.

The matrices satisfy, for $j\geq1$,
\begin{equation} \label{eq:matrix-determinants}
\det M_j=\left(1-\frac{k^2}{d_j}\right)\left(1-\frac{a^2(1+k^2)}{1+d_j}\right)>0,\qquad \det M_0=2(1-k^2)\{2-a^2(1+k^2)\}<0.
\end{equation}
Both factors in $\det M_j$ are nondecreasing in $j$, so $1>\det M_j\geq\det M_1>0$. The entries of $M_j$ and $M_j^{-1}$ are uniformly bounded, and
\begin{equation} \label{eq:matrix-limit}
M_\infty=\begin{pmatrix}1&a\\0&1\end{pmatrix},\qquad \norm{M_j-M_\infty}+\lVert M_j^{-1}-M_\infty^{-1} \rVert \leq\frac{C}{j^2},\quad j\geq1.
\end{equation}
Unless otherwise stated, the matrix norm is the operator norm induced by the Euclidean norm.

\begin{lemma} \label{lem:positive-symmetric-part}
Under \eqref{eq:parameter-condition}, the symmetric part of $M_j^{-1}$ is positive definite, uniformly in $j\geq1$.
\end{lemma}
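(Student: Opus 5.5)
The plan is to reduce the claim about $M_j^{-1}$ to the same claim about $M_j$, and then check the latter on the explicit $2\times2$ entries. For real $x\in\R^2$ put $y=M_j^{-1}x$. Then
\[ x^TM_j^{-1}x=(M_jy)^Ty=y^TM_j^Ty=y^TS_jy,\qquad S_j=\tfrac12(M_j+M_j^T). \]
Suppose we can show $S_j\geq c\,I$ with $c>0$ independent of $j\geq1$. Since $\norm{M_j}$ is uniformly bounded by \eqref{eq:matrix-limit}, we have $\abs{x}\leq\norm{M_j}\abs{y}\leq C\abs{y}$. This gives $x^TM_j^{-1}x\geq (c/C^2)\abs{x}^2$, which is the statement of the lemma. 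So it suffices to prove uniform positive definiteness of $S_j$.

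Write $M_j=\begin{pmatrix}A_j&B_j\\ C_j&1\end{pmatrix}$ with
\[ A_j=1-\frac{k^2}{d_j},\qquad B_j=a\,\frac{d_j-k^2}{1+d_j},\qquad C_j=\frac{a(1+k^2)}{d_j}. \]
Then $S_j$ has diagonal entries $A_j$ and $1$, and off-diagonal entry $(B_j+C_j)/2$. It is positive definite exactly when $A_j>0$ and $A_j>(B_j+C_j)^2/4$. For a uniform lower bound $c$ it is enough to bound $A_j-(B_j+C_j)^2/4$ below uniformly, because the trace of $S_j$ is bounded and the smallest eigenvalue is at least $\det S_j/\operatorname{tr}S_j$. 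I split into two cases, $j=1$ and $j\geq2$.

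\emph{Case $j=1$.} Here $d_1=k^2+1$, so $A_1=1/(1+k^2)$, $B_1=a/(k^2+2)$ and $C_1=a$. The condition becomes
\[ a^2(1+k^2)<\frac{4(k^2+2)^2}{(k^2+3)^2}. \]
For $k\geq2$ the right-hand side is at least $4\cdot(6/7)^2>2$, so this follows strictly from \eqref{eq:parameter-condition}. Since it is a single matrix, the strict inequality gives a positive margin.

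\emph{Case $j\geq2$.} Now $d_j\geq4k^2+1$, so $A_j\geq 1-\tfrac{k^2}{4k^2+1}\geq 3/4$. Also $\abs{B_j}\leq\abs a$ and $\abs{C_j}\leq\abs a\,\tfrac{1+k^2}{4k^2+1}\leq\tfrac{5}{17}\abs a$. From \eqref{eq:parameter-condition} and $k\geq2$ we get $a^2<2/5$. Hence $(B_j+C_j)^2/4\leq \tfrac{2}{5}\cdot\bigl(\tfrac{22}{17}\bigr)^2/4<0.17$, which gives a margin of at least $1/2$ uniformly in $j\geq2$.

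The only delicate step is $j=1$. There $A_1$ is small and the constraint genuinely uses \eqref{eq:parameter-condition}, whereas the $j\geq2$ tail has ample room. I would verify the $j=1$ arithmetic carefully and then take the minimum of the two margins to obtain $c$.
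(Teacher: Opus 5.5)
Your proof is correct and is essentially the paper's argument. Both reduce to the $2\times2$ criterion $4A_j>(B_j+C_j)^2$ and verify it at $j=1$ through $a^2(1+k^2)<2<4(k^2+2)^2/(k^2+3)^2$. The differences are only in bookkeeping. You pass from $M_j^{-1}$ to $M_j$ by the congruence $x=M_jy$ and use $\sup_j\norm{M_j}$ for uniformity, where the paper writes the inverse via the adjugate and uses $\det M_j\le1$. You also treat $j\ge2$ with crude bounds, whereas the paper uses monotonicity of $A_j$ and $\abs{B_j+C_j}$ in $j$ to reduce everything to $j=1$.
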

\begin{proof}
Write $M_j=\begin{pmatrix}\alpha_j&\beta_j\\\gamma_j&1\end{pmatrix}$, where $\alpha_j=1-k^2/d_j$, $\beta_j=a(d_j-k^2)/(d_j+1)$, and $\gamma_j=a(1+k^2)/d_j$. The symmetric part of $M_j^{-1}$ is
\[ (\det M_j)^{-1}\begin{pmatrix}1&-(\beta_j+\gamma_j)/2\\-(\beta_j+\gamma_j)/2&\alpha_j\end{pmatrix}. \]
It is positive definite precisely when $4\alpha_j>(\beta_j+\gamma_j)^2$. A simplification gives
\[ \beta_j+\gamma_j=a\left(1+\frac{1+k^2}{d_j(d_j+1)}\right). \]
As $j$ increases, $\abs{\beta_j+\gamma_j}$ is nonincreasing and $\alpha_j$ increases, so it is enough to check $j=1$. By \eqref{eq:parameter-condition},
\[ (\beta_1+\gamma_1)^2 = a^2 \left(\frac{k^2+3}{k^2+2}\right)^2 <\frac{2}{1+k^2}\left(\frac{k^2+3}{k^2+2}\right)^2<\frac{4}{1+k^2}=4\alpha_1, \]
where the second strict inequality holds for $k\geq2$. Since $\det M_j\leq1$ and $4\alpha_j-(\beta_j+\gamma_j)^2\geq4\alpha_1-(\beta_1+\gamma_1)^2>0$, the symmetric part of $M_j^{-1}$ is uniformly positive definite in $j$.
\end{proof}

\subsection{The stable matrix continued fraction}

\begin{lemma} \label{lem:stable-graph}
For every $\mu>0$, the initial pairs $(b_0,b_1)$ generating exponentially decaying solutions of the tail recurrence in \eqref{eq:mixed-recurrence} form a two-dimensional subspace. This subspace is the graph of a unique real matrix $R(\mu)$,
\begin{equation} \label{eq:stable-graph}
b_1=R(\mu)b_0,
\end{equation}
and $R:(0,\infty)\to\R^{2\times2}$ is continuous.
\end{lemma}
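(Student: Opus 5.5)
The plan is to substitute $c_j=M_jb_j$ for $j\geq1$ (with $c_0=M_0b_0$), so that the tail of \eqref{eq:mixed-recurrence} becomes
\[ c_{j+1}=c_{j-1}-\mu M_j^{-1}c_j,\qquad j\geq1. \]
This is a second-order recurrence with asymptotically constant coefficients. As a first-order system for $(c_{j-1},c_j)\in\R^4$ it has transfer matrix
\[ T_j=\begin{pmatrix}0&I\\ I&-\mu M_j^{-1}\end{pmatrix}, \]
and by \eqref{eq:matrix-limit} $T_j\to T_\infty$ with summable error $O(j^{-2})$. I will show that $T_\infty$ has no eigenvalue on the unit circle, and exactly two eigenvalues inside the disk counted with multiplicity. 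An eigenvalue $z$ of $T_\infty$ satisfies $\det\bigl(z^2-1+\mu zM_\infty^{-1}\bigr)=0$. If $|z|=1$, write $z=e^{\im\theta}$ and divide by $z$. The quantity $z-z^{-1}=2\im\sin\theta$ is purely imaginary, so we would need $\bigl(2\im\sin\theta+\mu M_\infty^{-1}\bigr)v=0$ for some $v\neq0$. Pairing with $\bar v$ and taking the real part gives $\mu\,\mathrm{Re}\langle M_\infty^{-1}v,v\rangle=0$. This is impossible, because the symmetric part of $M_\infty^{-1}$ is positive definite: it is the $j\to\infty$ limit in Lemma \ref{lem:positive-symmetric-part}, or directly $a^2<4$. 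The same pairing applies to every $M_j$, uniformly in $j$.

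For the count, the characteristic polynomial is invariant under $z\mapsto -1/z$ up to a power of $z$ combined with $M\mapsto M$ (replacing $z$ by $-z^{-1}$ maps $z^2-1+\mu zM$ to $z^{-2}(1-z^2-\mu zM)$, whose determinant vanishes at the same points as the original). So the eigenvalues come in pairs $z,-1/z$, and exactly two lie inside the disk. Alternatively, the count follows by continuity in $\mu$ together with the absence of unimodular roots.

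By the discrete Levinson theorem, or a standard dichotomy/roughness argument for summable perturbations of hyperbolic constant-coefficient systems, the tail recurrence then has an exponential dichotomy on $j\geq J$ for some large $J$. Its stable subspace $S_J(\mu)\subset\R^4$ is two-dimensional. Pulling it back through the invertible maps $T_j$ (invertible since $\det T_j=\pm1$) gives a two-dimensional space of initial data $(c_0,c_1)$, hence of $(b_0,b_1)$, generating exponentially decaying solutions. Here $M_0$ and $M_1$ are invertible by \eqref{eq:matrix-determinants}. The relation $b_0=-\mu^{-1}M_1b_1$ from the $j=0$ equation is handled separately: it is the matching condition used later, and is not part of the tail.

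The main obstacle is showing that this subspace is a graph over $b_0$, that is, that no nonzero decaying tail solution has $b_0=0$. I would use an energy identity. Multiply $\mu b_j=M_{j-1}b_{j-1}-M_{j+1}b_{j+1}$ by $c_j=M_jb_j$ and sum over $j\geq1$. The right-hand side gives $\sum_j\bigl(c_{j-1}\cdot c_j-c_{j+1}\cdot c_j\bigr)=c_0\cdot c_1$ after telescoping, which uses the exponential decay. The left-hand side is $\mu\sum_{j\ge1}\langle M_j^{-1}c_j,c_j\rangle\geq\mu\kappa\sum|c_j|^2$, where $\kappa>0$ comes from Lemma \ref{lem:positive-symmetric-part}. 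So $b_0=0$ forces $c_0=0$, hence all $c_j=0$, hence $b\equiv0$. Since the decaying space is two-dimensional and injects into the $b_0$-coordinate, it is the graph of a unique $R(\mu)$. The matrix is real because the recurrence has real coefficients, so real and imaginary parts of decaying solutions also decay.

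Continuity comes from the dichotomy construction. The stable subspace depends continuously on $\mu$: either by the roughness theorem with $\mu$ in compact subsets of $(0,\infty)$, or by writing $S_J(\mu)$ as the fixed point of a contraction (a Lyapunov–Perron sum) that is continuous in $\mu$. A continuously varying graph gives a continuous $R$. An equivalent route is to define $R$ as the limit of matrix continued-fraction truncations $R_N(\mu)$, with uniform convergence on compacts.
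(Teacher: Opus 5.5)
Your proposal is correct and follows essentially the same route as the paper: the substitution $c_j=M_jb_j$, roughness of exponential dichotomies for the asymptotically constant hyperbolic recurrence, the telescoping energy identity with Lemma~\ref{lem:positive-symmetric-part} to show the stable plane is a graph over $b_0$, and uniformity on compact $\mu$-intervals for continuity. The differences are minor. You certify hyperbolicity of the limiting system by the positivity and $z\mapsto -1/z$ pairing argument, instead of computing its double roots $\rho$ and $-\rho^{-1}$ (the roots of $(z^2+\mu z-1)^2$) explicitly. You also treat the matrix continued fraction as optional, whereas the paper builds it here because Lemma~\ref{lem:endpoints} reuses it.
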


\begin{proof}
We first examine the constant recurrence obtained by replacing $M_j$ with $M_\infty$. Its exponentially decaying solutions are $b_j=R_\infty^jc$, where
\begin{equation} \label{eq:constant-tail-ratio}
R_\infty=\begin{pmatrix}\rho&\sigma\\0&\rho\end{pmatrix},\qquad \rho=\frac{\sqrt{\mu^2+4}-\mu}{2}\in(0,1),\qquad \sigma=\frac{a(1-\rho^2)}{\mu+2\rho}.
\end{equation}
Indeed, $\mu R_\infty+M_\infty R_\infty^2=M_\infty$. For every $\theta\in(\rho,1)$, one has $\lVert R_\infty^j \rVert \leq C_\theta\theta^j$, so the limiting recurrence relation has a two-dimensional stable space.

Write the recurrence \eqref{eq:mixed-recurrence} as $\mathbf{b}_{j+1}=\mcal A_j(\mu)\mathbf{b}_j$ for $\mathbf{b}_j=(b_j,b_{j-1})^{\mathsf T}$, where
\[ \mcal A_j(\mu)=\begin{pmatrix}-\mu M_{j+1}^{-1}&M_{j+1}^{-1}M_{j-1}\\I&0\end{pmatrix}. \]
The limiting matrix $\mcal A_\infty(\mu) = \begin{pmatrix}-\mu M_\infty^{-1} & I\\I&0\end{pmatrix}$ is hyperbolic: its characteristic roots are $\rho$ and $-\rho^{-1}$, each with algebraic multiplicity two, and its stable generalized eigenspace is the graph $b_j=R_\infty b_{j-1}$. Since $\sum_{j\geq1}\norm{\mcal A_j-\mcal A_\infty}<\infty$ by \eqref{eq:matrix-limit}, roughness of discrete exponential dichotomies~\cite[Chapter~3]{Coppel1978} gives, for all sufficiently large $J$, an exact two-dimensional stable space at level $J$; it consists of all solutions that decay exponentially as $j\to\infty$.

We now identify this space by a convergent matrix continued fraction. For a matrix $R$, set $T_j(R)=(\mu I+M_{j+1}R)^{-1}M_{j-1}$. The identities $b_j=R_jb_{j-1}$ satisfy the recurrence \eqref{eq:mixed-recurrence} exactly when $R_j=T_j(R_{j+1})$. Use the vector norm $\abs{(x,y)}_\delta=\max\{\abs{x},\delta\abs{y}\}$ with $\delta=\max\{1, \frac{2\abs{\sigma}}{1-\rho}\}$. Its induced matrix norm satisfies $\norm{R_\infty}_\delta\leq \frac{1+\rho}{2}<1$. Choose $\theta\in(\frac{1+\rho}{2},1)$. By \eqref{eq:matrix-limit}, there are $\epsilon>0$ and $J$ such that, whenever $\norm{R-R_\infty}_\delta\leq \epsilon$ and $j\geq J$,
\[ \begin{aligned} \norm{(\mu I+M_{j+1}R)^{-1}M_{j+1}}_\delta&\leq\theta,\qquad &\norm{(\mu I+M_{j+1}R)^{-1}M_{j-1}}_\delta&\leq\theta,\\ \norm{T_j(R_\infty)-R_\infty}_\delta&\leq(1-\theta^2)\epsilon. \end{aligned} \]
The resolvent identity gives
\begin{equation} \label{eq:tail-exact-difference}
T_j(X)-T_j(Y)=(\mu I+M_{j+1}X)^{-1}M_{j+1}(Y-X)(\mu I+M_{j+1}Y)^{-1}M_{j-1}.
\end{equation}
Hence $T_j$ maps the closed $\epsilon$-ball about $R_\infty$ into itself and contracts there by the factor $\theta^2$. For $N\geq J$, start with $R_{N+1}^{(N)}=R_\infty$ and define $R_j^{(N)}=T_j(R_{j+1}^{(N)})$ backward. If $N'>N$, repeated contraction gives
\[ \lVert R_j^{(N')}-R_j^{(N)} \rVert_\delta\leq \epsilon\theta^{2(N-j+1)},\qquad J\leq j\leq N. \]
Thus $R_j^{(N)}$ converges for each fixed $j\geq J$, its limit satisfies the backward equation, and $R_j\to R_\infty$ as $j\to\infty$. These ratios generate a two-dimensional family of exponentially decaying solutions and therefore give the stable subspace. Propagating the stable subspace backward by the linear recurrence is legitimate through $j=1$ because $M_0,M_1,\ldots$ are invertible by \eqref{eq:matrix-determinants}.

It remains to show that this stable plane is a graph over $b_0$. Put $p_j=M_jb_j$. Then
\begin{equation} \label{eq:p-recurrence}
p_{j+1}=p_{j-1}-\mu M_j^{-1}p_j,\qquad j\geq1.
\end{equation}
If an exponentially decaying solution has $b_0=0$, multiply \eqref{eq:p-recurrence} by $p_j^{\mathsf T}$ and sum from $j=1$ to $N$. The nearest-neighbor terms cancel and give $\mu\sum_{j=1}^Np_j^{\mathsf T}M_j^{-1}p_j=p_1^{\mathsf T}p_0-p_N^{\mathsf T}p_{N+1}$. Passage to the limit gives $\sum_{j\geq1}p_j^{\mathsf T}M_j^{-1}p_j=0$, and Lemma \ref{lem:positive-symmetric-part} forces every $p_j$ to vanish. The projection of the constructed two-dimensional stable plane onto $b_0\in\R^2$ is therefore injective and hence bijective, proving the graph representation. This identity also proves that the constructed plane contains every decaying solution: choose a vector in the constructed plane with the same $b_0$ component as any such solution and apply the same uniqueness argument to their difference.

Finally, on each compact interval of $\mu>0$ the exponential dichotomy, tail contraction, and finite backward propagation are uniform. A continuous basis of the stable plane at level $J$ therefore propagates to a continuous basis at level zero. Its projection onto the $b_0$ coordinate is an invertible $2\times2$ matrix by the preceding transversality argument, so inversion of that projection shows that $R(\mu)$ is continuous.
\end{proof}

\subsection{Endpoint asymptotics and matching}

\begin{lemma} \label{lem:endpoints}
There is a constant $C\geq1$ such that
\begin{equation} \label{eq:endpoint-asymptotics}
\norm{M_1R(\mu)-M_0}\leq C\mu\quad(0<\mu\leq C^{-1}),\qquad \norm{R(\mu)-\mu^{-1}M_0}\leq C\mu^{-3}\quad(\mu\geq C).
\end{equation}
\end{lemma}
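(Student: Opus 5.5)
The plan is to treat the two regimes in \eqref{eq:endpoint-asymptotics} separately. In both, I will identify $R(\mu)$ with the first ratio $R_1$ of the backward continued fraction from the proof of Lemma \ref{lem:stable-graph}, so that $b_1=R_1b_0$ and $b_j=R_jb_{j-1}$. Any sequence of ratios that generates a decaying solution must coincide with this one, by the uniqueness part of that lemma, which rests on the energy identity and Lemma \ref{lem:positive-symmetric-part}.

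\emph{Large $\mu$.} Here I expect a direct contraction argument on a ball of radius $\sim\mu^{-1}$ that is uniform in $j$. The steps are:
\begin{enumerate}[label=\textup{(\alph*)},leftmargin=*]
\item For $j\geq1$ set $T_j(R)=(\mu I+M_{j+1}R)^{-1}M_{j-1}$, where $M_0$ is used when $j=1$.
\item The entries of all $M_j$ are uniformly bounded by \eqref{eq:matrix-limit}. Hence for $\norm{R}\leq K/\mu$, with $K=2\sup_j\norm{M_j}$ and $\mu\geq C$ large, one has $\norm{(\mu I+M_{j+1}R)^{-1}}\leq 2/\mu$.
\item Consequently $T_j$ maps the ball $\{\norm{R}\leq K/\mu\}$ into itself. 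By \eqref{eq:tail-exact-difference} it is a contraction there with factor $O(\mu^{-2})$, uniformly in $j$.
\item The backward iteration therefore converges on this ball for every $j\geq1$. The resulting ratios satisfy $\norm{R_j}\leq K/\mu<1$, so they generate a decaying solution, and by uniqueness $R(\mu)=R_1$.
\item Expand with a Neumann series:
\[ R_1=\mu^{-1}(I+\mu^{-1}M_2R_2)^{-1}M_0=\mu^{-1}M_0-\mu^{-2}M_2R_2M_0+O(\mu^{-3}\norm{R_2}^2). \]
Since $\norm{R_2}=O(\mu^{-1})$, the second term is $O(\mu^{-3})$, and this gives the second bound.
\end{enumerate}

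\emph{Small $\mu$.} I will work with $p_j=M_jb_j$ and \eqref{eq:p-recurrence}. The target quantity is $M_1R(\mu)b_0-M_0b_0=p_1-p_0=:w_1$, and I need $\abs{w_1}\leq C\mu\abs{b_0}$.

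The naive telescoping of \eqref{eq:p-recurrence} gives $p_0+p_1=\mu\sum_{j\geq1}M_j^{-1}p_j$. This is useless as it stands, because $\rho\approx1-\mu/2$ and the sum is of size $\mu^{-1}$. Instead I will isolate the alternating component. Writing $w_j=p_j-p_{j-1}$, the recurrence reads $w_{j+1}+w_j=-\mu M_j^{-1}p_j$. Multiplying by $(-1)^j$ and summing, the left side telescopes, since the tail vanishes by decay. This yields
\[ w_1=\mu\sum_{j\geq1}(-1)^{j}M_j^{-1}p_j \]
(up to a harmless sign convention). Summation by parts then bounds the alternating sum by
\[ \abs{M_1^{-1}p_1}+\sum_{j\geq1}\abs{M_{j+1}^{-1}p_{j+1}-M_j^{-1}p_j}. \]
The increments split as $\norm{M_{j+1}^{-1}-M_j^{-1}}\abs{p_j}+C\abs{w_{j+1}}$, and the first family is summable by \eqref{eq:matrix-limit} once $\sup_j\abs{p_j}$ is controlled.

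So the whole small-$\mu$ estimate reduces to uniform-in-$\mu$ bounds
\[ \sup_j\abs{p_j}\lesssim\abs{b_0},\qquad \sum_j\abs{w_j}\lesssim\abs{b_0}. \]
This step is the main obstacle. For it I would combine three ingredients:
\begin{enumerate}[label=\textup{(\roman*)},leftmargin=*]
\item the energy identity $\mu\sum_j p_j^{\mathsf T}M_j^{-1}p_j=p_1^{\mathsf T}p_0$ together with the coercivity of Lemma \ref{lem:positive-symmetric-part};
\item the relation $w_{j+1}=-w_j-\mu M_j^{-1}p_j$, which shows that the alternating part is forced only at size $\mu\abs{p_j}$;
\item a comparison of $R_j$ with $R_\infty$, which for small $\mu$ has $\rho=1-\mu/2+O(\mu^2)$ and $\sigma=O(\mu)$. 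This should give $\abs{w_j}\lesssim\mu\rho^{j}\abs{b_0}$ beyond some fixed $J$. The finitely many steps $j<J$ are then handled by continuity at $\mu=0$, where the recurrence becomes $p_{j+1}=p_{j-1}$ and the nondecaying alternating mode is excluded.
\end{enumerate}
If (iii) turns out not to be uniform as $\mu\to0$, my fallback is to run the contraction of Lemma \ref{lem:stable-graph} in variables adapted to $(p_j+p_{j-1},\,p_j-p_{j-1})$. In these variables the nearly neutral direction decouples from the strongly contracting alternating direction.
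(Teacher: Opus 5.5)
Your large-$\mu$ argument is correct and is the paper's: an invariant ball of radius $O(\mu^{-1})$, contraction factor $O(\mu^{-2})$ via \eqref{eq:tail-exact-difference}, $R(\mu)=T_1(R_2)$ by uniqueness, and an $O(\mu^{-3})$ correction.

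\textbf{The small-$\mu$ part has a genuine gap.} Your identity $w_1=\mu\sum_{j\geq1}(-1)^jM_j^{-1}p_j$ and the summation by parts are correct. They only relocate the difficulty, however. Since $\xi_j=(p_j+p_{j-1})/2$ decays only at rate $1-c\mu$, the bound $\sum_j\abs{w_j}\lesssim\abs{b_0}$ needs $\abs{w_j}=O(\mu)\abs{\xi_j}$ on average over the first $\sim\mu^{-1}$ levels. That is the $O(\mu)$ smallness of the alternating component at essentially every level, which is at least as hard as the lemma. Re-applying your identity at each level and summing does not close either, since the double sum puts a weight $\ell$ in front of $\abs{w_\ell}$.

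The ingredients you list do not supply this bound:
\begin{itemize}
\item The energy identity only gives $\mu\kappa\sum_j\abs{p_j}^2\leq\abs{p_0}\abs{p_1}$.
\item Ingredient (ii), iterated backward, only gives $\abs{w_j}\leq\mu\mathsf K\sum_{\ell\geq j}\abs{p_\ell}$, which is $O(1)$.
\item For (iii), the constants in Lemma \ref{lem:stable-graph} degenerate as $\mu\to0$: $\theta\to1$ forces $J\to\infty$. Moreover, $R_j-R_\infty$ is not $O(\mu)$ for any fixed $j$. A posteriori $R_j=M_j^{-1}(I+S_j)(I-S_j)^{-1}M_{j-1}$ with $\norm{S_j}\leq C\mu$, so $R_j-R_\infty=M_j^{-1}(M_{j-1}-M_j)+O(\mu)$, which is of order $j^{-3}$. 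The smallness of $w_j=(M_jR_j-M_{j-1})b_{j-1}$ comes from an exact cancellation that a comparison with $R_\infty$ cannot see.
\item ``Continuity at $\mu=0$'' is not available. At $\mu=0$ every solution of $p_{j+1}=p_{j-1}$ is $2$-periodic, so there is no stable plane. The convergence of the stable plane to $\{p_1=p_0\}$ at rate $O(\mu)$ is exactly what must be proved.
\end{itemize}

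\textbf{Your fallback uses the right variables but the wrong mechanism.} The paper uses exactly $\xi_j$, $\eta_j=(p_j-p_{j-1})/2$ and graphs $\eta_j=S_j\xi_j$. But the alternating direction is not strongly contracting: it is multiplied by $-(I+\frac{\mu}{2}M_j^{-1})$ at each step, so the dichotomy gap is only $O(\mu)$. Balancing that gap against the coupling $\frac{\mu}{2}M_j^{-1}\xi_j$ (equivalently, comparing the graphs with $S=0$) yields only $\norm{S_j}=O(1)$. Two ideas are missing.
\begin{itemize}
\item[(a)] By Lemma \ref{lem:positive-symmetric-part}, the graph transform $\Gamma_j$ satisfies $\norm{D\Gamma_j(X)}\leq(1+\kappa\mu/2)^{-1}$ on a ball $\norm{X}\leq\varepsilon$ with $\varepsilon$ independent of $\mu$, for every $j\geq1$. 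So no threshold $J$ appears.
\item[(b)] One compares with the exact limiting graph $S_\infty=(R_\infty-I)(R_\infty+I)^{-1}$, which satisfies $\norm{S_\infty}\leq C\mu$. In the $p$-variables the coefficients depend on $j$ only through $\mu M_j^{-1}$, so the defects obey $\norm{\Gamma_j(S_\infty)-S_\infty}\leq C\mu j^{-2}$. Summing them, using the contraction only as non-expansion, gives $\norm{S_1}\leq C\mu\sum_{\ell}\ell^{-2}$.
\end{itemize}
Then $M_1R(\mu)=(I+S_1)(I-S_1)^{-1}M_0$ gives the small-$\mu$ bound directly, and the alternating-sum reduction is unnecessary.
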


\begin{proof}
For the small-$\mu$ estimate, use the Euclidean operator norm and put
\[ \msf K=\sup_{j\geq1}\lVert M_j^{-1}\rVert,\qquad \kappa=\inf_{\substack{j\geq1\\\abs{z}=1}}z^{\mathsf T}M_j^{-1}z>0. \]
The positivity of $\kappa$ follows from Lemma \ref{lem:positive-symmetric-part}. Introduce $p_j=M_jb_j$, $\xi_j=(p_j+p_{j-1})/2$, and $\eta_j=(p_j-p_{j-1})/2$. Equation \eqref{eq:p-recurrence} becomes
\[ \xi_{j+1}=\xi_j-\frac{\mu}{2}M_j^{-1}(\xi_j+\eta_j),\qquad \eta_{j+1}=-\eta_j-\frac{\mu}{2}M_j^{-1}(\xi_j+\eta_j). \]
If the stable planes at levels $j+1$ and $j$ are respectively $\eta_{j+1}=X\xi_{j+1}$ and $\eta_j=Y\xi_j$, their exact relation is
\begin{equation} \label{eq:small-exact-graph}
\left(I+\frac{\mu}{2}M_j^{-1}-\frac{\mu}{2}XM_j^{-1}\right)Y=-X-\frac{\mu}{2}M_j^{-1}+\frac{\mu}{2}XM_j^{-1}.
\end{equation}
Denote the resulting map by $Y=\Gamma_j(X)$. Wherever the inverse exists, \eqref{eq:small-exact-graph} gives
\[ I+\Gamma_j(X)=\left[I+\frac{\mu}{2}(I-X)M_j^{-1}\right]^{-1}(I-X). \]
Differentiation and the resolvent identity therefore give
\begin{equation} \label{eq:small-exact-derivative}
D\Gamma_j(X)[Z]=-\left[I+\frac{\mu}{2}(I-X)M_j^{-1}\right]^{-1}Z\left[I+\frac{\mu}{2}M_j^{-1}(I-X)\right]^{-1}.
\end{equation}
Choose $0<\varepsilon<1$ so that $2\msf K\varepsilon\leq\kappa$. If $\norm{X}\leq\varepsilon$, then the symmetric parts of both $(I-X)M_j^{-1}$ and $M_j^{-1}(I-X)$ are bounded below by $\frac{\kappa}{2}I$. For a real matrix $B$ whose symmetric part is at least $dI$, one has
\[ \abs{(I+\frac{\mu}{2}B)z}^2\geq(1+\mu d)\abs z^2. \]
Thus both inverses in \eqref{eq:small-exact-derivative} exist throughout this ball, and
\begin{equation} \label{eq:small-contraction}
\norm{D\Gamma_j(X)}\leq\left(1+\frac{\kappa\mu}{2}\right)^{-1}\leq1-c\mu,\qquad \norm{X}\leq\varepsilon,
\end{equation}
for sufficiently small $\mu>0$, with $c>0$ independent of $j$. This estimate requires no bound on $\Gamma_j(X)$. The limiting stable subspace for $(\xi_j,\eta_j)$ is the graph of $S_\infty=(R_\infty-I)(R_\infty+I)^{-1}$. From \eqref{eq:constant-tail-ratio}, we get $\norm{S_\infty}\leq C\mu$. Since $S_\infty$ solves the limiting equation,
\begin{equation} \label{eq:small-coefficient-identity}
\Gamma_j(S_\infty)-S_\infty=-\frac{\mu}{2}\left\{I+\frac{\mu}{2}(I-S_\infty)M_j^{-1}\right\}^{-1}(I-S_\infty)(M_j^{-1}-M_\infty^{-1})(I+S_\infty).
\end{equation}
Hence $\norm{\Gamma_j(S_\infty)-S_\infty}\leq C\mu j^{-2}$.

Choose $\mu>0$ sufficiently small that
\[ \norm{S_\infty}+\sum_{\ell=1}^{\infty}\norm{\Gamma_\ell(S_\infty)-S_\infty}\leq\frac{\varepsilon}{2}. \]
The backward iteration from infinity is made precise by finite cutoffs. For $N\geq1$, define $S_{N+1}^{(N)}=S_\infty$ and $S_j^{(N)}=\Gamma_j(S_{j+1}^{(N)})$ for $j=N,\ldots,1$. By backward induction, \eqref{eq:small-contraction} applies on the segment joining $S_\infty$ to each preceding iterate and gives
\begin{equation} \label{eq:small-cutoff-bound}
\begin{aligned}
\norm{S_j^{(N)}-S_\infty}&\leq\sum_{\ell=j}^N(1-c\mu)^{\ell-j}\norm{\Gamma_\ell(S_\infty)-S_\infty}\\
&\leq C\mu\sum_{\ell=j}^{\infty}\ell^{-2},\qquad 1\leq j\leq N.
\end{aligned}
\end{equation}
Indeed, the first bound and the choice of $\mu$ imply $\norm{S_j^{(N)}}\leq\varepsilon/2$, so all iterates and the required segments remain inside the ball on which \eqref{eq:small-contraction} holds. If $N'>N$, the terminal discrepancy at level $N+1$ is bounded uniformly, and repeated contraction gives $\norm{S_j^{(N')}-S_j^{(N)}}\leq C(1-c\mu)^{N-j+1}$. Thus $S_j^{(N)}$ converges for each fixed $j$ to a graph $S_j$ satisfying $\norm{S_j-S_\infty}\leq C\mu\sum_{\ell=j}^{\infty}\ell^{-2}$. In particular, this indexed tail bound proves $S_j\to S_\infty$ as $j\to\infty$.

This limiting graph corresponds to the stable subspace from Lemma \ref{lem:stable-graph} after returning to the original variables $b_j$ and $R_j$. Indeed, define
\[ R_j=M_j^{-1}(I+S_j)(I-S_j)^{-1}M_{j-1}. \]
Then $b_j=R_jb_{j-1}$ solves the recurrence. Since $S_j\to S_\infty$ and $M_\infty$ commutes with $R_\infty$, we have $R_j\to M_\infty^{-1}R_\infty M_\infty=R_\infty$. Hence these solutions decay exponentially, and uniqueness in Lemma \ref{lem:stable-graph} identifies their level-one graph with $R(\mu)$. In particular, \eqref{eq:small-cutoff-bound} gives $\norm{S_1}\leq C\mu$. On the stable subspace, $p_0=(I-S_1)\xi_1$ and $p_1=(I+S_1)\xi_1$, so
\[ M_1R(\mu)=(I+S_1)(I-S_1)^{-1}M_0. \]
This proves $\norm{M_1R(\mu)-M_0}\leq C\mu$.

For the large-$\mu$ estimate, let $\msf M=\sup_{j\geq0}\norm{M_j}$. The backward maps $T_j$ preserve the ball $\norm{R}\leq2\msf M/\mu$. By \eqref{eq:tail-exact-difference}, they are contractions with factor $4\msf M^2/\mu^2$ when $\mu$ is large. Repeating the construction in the proof of Lemma \ref{lem:stable-graph} produces a sequence $R_j$ in the ball $\norm{R}\leq2\msf M/\mu$. Since $R(\mu)=T_1(R_2)$, the resolvent identity gives
\[ R(\mu)-\mu^{-1}M_0=-\mu^{-1}(\mu I+M_2R_2)^{-1}M_2R_2M_0, \]
which is $O(\mu^{-3})$.
\end{proof}

\begin{proof}[Proof of Theorem \ref{thm:main-inertial-mhd-instability}]
By Lemma \ref{lem:stable-graph}, a decaying tail satisfies the zeroth-mode equation in \eqref{eq:mixed-recurrence} if and only if
\[ \{\mu I+M_1R(\mu)\}b_0=0. \]
Define $f(\mu)=\det\{\mu I+M_1R(\mu)\}$. This is a continuous real-valued function. The small-$\mu$ estimate in Lemma \ref{lem:endpoints}, together with $\det M_0<0$, gives $f(\mu)<0$ for sufficiently small $\mu>0$. The large-$\mu$ estimate gives
\[ \mu I+M_1R(\mu)=\mu\left[I+\mu^{-2}M_1M_0+O(\mu^{-4})\right], \]
so $f(\mu)>0$ for sufficiently large $\mu$. The intermediate value theorem supplies $\mu_\ast>0$ with $f(\mu_\ast)=0$.

Choose $0\neq b_0\in\ker\{\mu_\ast I+M_1R(\mu_\ast)\}$ and let $(b_j)_{j\geq0}$ be the associated exponentially decaying sequence. Returning to the original coefficients, we obtain the nonzero normal mode
\[ e^{\lambda_\ast t}e^{\im x_2}\sum_{j=0}^\infty \im^{1-j}b_j\cos(jkx_1),\qquad \lambda_\ast=\frac{k\mu_\ast}{2}>0. \]
Exponential decay of Fourier coefficients makes the eigenfunction smooth.
\end{proof}

\section{Linear analysis around the purely magnetic equilibrium}

\subsection{Equilibrium, phase spaces, and linearization}

For a positive integer $k$, consider the purely magnetic steady state of \eqref{eq:imh-system}
\begin{equation} \label{eq:pure-equilibrium}
\psi_\ast=0,\qquad \Phi_\ast=\sin(kx_1),\qquad \omega_\ast=0,\qquad F_\ast=(1+k^2)\sin(kx_1).
\end{equation}
Let $(\omega,F)$ denote a perturbation and recover $(\psi,\Phi)$ from the elliptic equations in \eqref{eq:imh-system}. Since
\[ \grad^\perp\Phi_\ast=(0,k\cos(kx_1)),\qquad \grad F_\ast=(k(1+k^2)\cos(kx_1),0), \]
the linearized equations are
\begin{equation} \label{eq:linearized-system}
\begin{aligned}
\del_t\omega+k\cos(kx_1)\del_{x_2}A_kF&=0,\\
\del_tF+k\cos(kx_1)\del_{x_2}B_k\omega&=0,
\end{aligned}
\end{equation}
where
\begin{equation} \label{eq:Ak-Bk}
A_k=I-(1+k^2)(1-\Delta)^{-1},\qquad B_k=-(1+k^2)\Delta^{-1}.
\end{equation}
Thus $\del_t(\omega,F)=\mscr L_k(\omega,F)$ with
\[ \mscr L_k=-k\cos(kx_1)\del_{x_2}\begin{pmatrix}0&A_k\\B_k&0\end{pmatrix}. \]
We use the complexified phase spaces
\[ X^s=H^{s-1}_0(\mbb T^2;\C)\times H^s(\mbb T^2;\C) \ni (\omega, F),\qquad s\geq0, \]
where the subscript zero denotes mean-zero vorticity. The multiplier $A_k$ has order zero, $B_k$ has order $-2$, and $\del_{x_2}$ loses one derivative. It follows that $\mscr L_k$ is bounded on every $X^s$ and therefore generates a uniformly continuous group there.

\subsection{Transverse Fourier blocks and the self-adjoint reduction}

The coefficients are independent of $x_2$, so fix $n\neq0$, suppress the factor $e^{\im nx_2}$, and write $X_n^s=H^{s-1}(\mbb T_{x_1};\C)\times H^s(\mbb T_{x_1};\C)$ for the $n$-th Fourier mode subspace in $x_2$, equipped with the norm inherited from $X^s$. The restricted multipliers have symbols
\begin{equation} \label{eq:full-symbols}
A_{k,n}e^{\im mx_1}=\frac{m^2+n^2-k^2}{m^2+n^2+1}e^{\im mx_1},\qquad B_{k,n}e^{\im mx_1}=\frac{1+k^2}{m^2+n^2}e^{\im mx_1}.
\end{equation}
Put $C_k=\cos(kx_1)$, $P=C_kA_{k,n}$, and $Q=C_kB_{k,n}$. Then
\begin{equation} \label{eq:squared-mode-operator}
\mscr L_{k,n}=-\im nk\begin{pmatrix}0&P\\Q&0\end{pmatrix},\qquad \mscr L_{k,n}^2=-n^2k^2\begin{pmatrix}PQ&0\\0&QP\end{pmatrix}.
\end{equation}
Define
\begin{equation} \label{eq:full-Kn}
D_{k,n}=C_kB_{k,n}C_k,\qquad K_{k,n}=D_{k,n}^{1/2}A_{k,n}D_{k,n}^{1/2}.
\end{equation}
Since the symbol of $B_{k,n}$ is positive and tends to zero as $|m|\to\infty$, the operator $B_{k,n}$ is self-adjoint, positive, and compact on $L^2(\mbb T_{x_1})$. The operators $B_{k,n}$ and $C_k$ are injective on $L^2(\mbb T_{x_1})$. Thus $D_{k,n}$ is self-adjoint, positive, compact, and injective. Since $A_{k,n}$ is bounded and self-adjoint, $K_{k,n}$ is compact and self-adjoint. Therefore, every nonzero spectral point of $K_{k,n}$ is an isolated real eigenvalue of finite multiplicity, with only zero as a possible accumulation point. Proposition \ref{prop:one-mode-spectrum} transfers this discrete spectrum to $\mscr L_{k,n}$ on each Fourier block.

\begin{proposition} \label{prop:one-mode-spectrum}
For each fixed $n\neq0$, the operator $\mscr L_{k,n}$ is compact on $X_n^s$ for any $s\geq 0$. The nonzero spectral values of $\mscr L_{k,n}$ are precisely the roots of
\[ z^2=-n^2k^2\kappa,\qquad \kappa\in\sigma(K_{k,n})\setminus\{0\}. \]
Moreover, the algebraic multiplicity of $z$ for $\mscr L_{k,n}$ is identical to that of $\kappa$ for $K_{k,n}$. In addition, an eigenfunction of $\mscr L_{k,n}$ with nonzero eigenvalue is smooth.
\end{proposition}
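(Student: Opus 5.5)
The plan is to combine three ingredients: the mapping properties of the Fourier multipliers, the block structure of $\mscr L_{k,n}^2$ in \eqref{eq:squared-mode-operator}, and the standard fact that for bounded operators $X,Y$ the operators $XY$ and $YX$ have the same nonzero spectrum with the same algebraic multiplicities. Throughout, the nonzero spectrum of a compact operator consists of eigenvalues of finite algebraic multiplicity.

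\emph{Compactness.} On $\mbb T_{x_1}$, the multiplier $A_{k,n}$ has order $0$ and $B_{k,n}$ has order $-2$, and multiplication by $C_k$ preserves every $H^r$.
\begin{itemize}
\item For $F\in H^s$, we have $PF\in H^s$, and the embedding $H^s\hookrightarrow H^{s-1}$ is compact.
\item For $\omega\in H^{s-1}$, we have $Q\omega\in H^{s+1}$, and the embedding $H^{s+1}\hookrightarrow H^s$ is compact.
\end{itemize}
Hence both off-diagonal entries of $\mscr L_{k,n}$ are compact as maps into the required spaces, and $\mscr L_{k,n}$ is compact on $X_n^s$.

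\emph{Smoothness.} I will prove this next, since it makes the spectrum independent of $s$ and lets me compute everything on $L^2$. If $\mscr L_{k,n}(\omega,F)=z(\omega,F)$ with $z\neq0$, then
\[ \omega=-\tfrac{\im nk}{z}PF,\qquad F=-\tfrac{\im nk}{z}Q\omega. \]
The bootstrap runs as follows: $F\in H^s$ gives $\omega\in H^s$, which gives $F\in H^{s+2}$, and iterating gives smoothness. The same argument applied to generalized eigenvectors, by induction on the Jordan chain, shows that every generalized eigenspace for $z\neq0$ consists of smooth functions. Consequently all spectral data can be computed in $L^2\times L^2$, independently of $s$.

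\emph{Eigenvalue correspondence.} Suppose $z\neq0$ is an eigenvalue of $\mscr L_{k,n}$. Then $\omega\neq0$, because otherwise $F=-\frac{\im nk}{z}Q\omega=0$. Moreover $PQ\omega=\kappa'\omega$ with $z^2=-n^2k^2\kappa'$.

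Conversely, suppose $PQ\omega=\kappa'\omega$ with $\kappa'\neq0$, and let $z$ be either root of $z^2=-n^2k^2\kappa'$. Setting $F=-\frac{\im nk}{z}Q\omega$ produces an eigenvector of $\mscr L_{k,n}$ with eigenvalue $z$.

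\emph{Transfer to $K_{k,n}$.} It remains to identify the nonzero spectrum of $PQ$ with that of $K_{k,n}$. Writing $C=C_k$, $A=A_{k,n}$, $B=B_{k,n}$, and $D=D_{k,n}$, I apply cyclic swaps:
\[ PQ=C\,(ACB)\ \sim\ (ACB)\,C=AD=(AD^{1/2})\,D^{1/2}\ \sim\ D^{1/2}AD^{1/2}=K_{k,n}, \]
where each $\sim$ means equal nonzero spectrum with equal algebraic multiplicities. Similarly $QP=(CB)(CA)\sim(CA)(CB)=PQ$. Each swap is justified on $L^2$ because all factors are bounded there.

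\emph{Multiplicities.} I expect the multiplicity bookkeeping to be the main obstacle. I will handle it in two steps.
\begin{enumerate}
\item The involution $J(\omega,F)=(\omega,-F)$ satisfies $J\mscr L_{k,n}J=-\mscr L_{k,n}$. Hence $z$ and $-z$ have equal algebraic multiplicity $m_z$.
\item For $z\neq0$, the generalized eigenspace of $\mscr L_{k,n}^2$ at $z^2$ is the direct sum of the generalized eigenspaces of $\mscr L_{k,n}$ at $z$ and at $-z$. This follows by factoring $(\mscr L^2-z^2)^m=(\mscr L-z)^m(\mscr L+z)^m$ with coprime factors. Its dimension is therefore $2m_z$.
\end{enumerate}
On the other hand, \eqref{eq:squared-mode-operator} shows that this same space equals the generalized eigenspace of $PQ$ at $\kappa'$ (in the $\omega$ slot) plus that of $QP$ (in the $F$ slot). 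By the swaps above, each of these has dimension equal to the algebraic multiplicity $m_\kappa$ of $\kappa=\kappa'$ for $K_{k,n}$. Therefore $2m_z=2m_\kappa$, which gives $m_z=m_\kappa$.

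The delicate points are checking that the $XY$/$YX$ multiplicity equality really applies to the unbounded-looking factor chain, and that working in $L^2$ is legitimate for every $s$. I would justify both via the smoothness of generalized eigenvectors, which holds for every factor ordering appearing in the chain.
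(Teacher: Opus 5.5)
Your proposal is correct and follows essentially the same route as the paper. The shared ingredients are compactness from the derivative balance of $P$ and $Q$, the $XY$/$YX$ swap lemma to pass from $PQ$ and $QP$ to $K_{k,n}$, the involution $J$ together with the splitting of the generalized eigenspace of $\mscr L_{k,n}^2$ at $z^2$ into those at $\pm z$, and smoothness of generalized eigenvectors for independence of $s$. You differ only cosmetically: you do every swap on $L^2$ and bootstrap directly from the eigen-equations rather than using that $\mscr L_{k,n}^2$ gains two derivatives.

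One small correction: your closing claim that smoothness holds ``for every factor ordering in the chain'' is not evident for orderings involving $D_{k,n}^{1/2}$, since functional calculus need not preserve smoothness. The claim is also unnecessary, because the $L^2$ swaps need only boundedness, and smoothness is required only for $PQ$ and $QP$, which gain two derivatives.
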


\begin{proof}
Compactness on $X_n^s$ follows directly from the derivative balance: $P:H^s\to H^s\hookrightarrow H^{s-1}$ and $Q:H^{s-1}\to H^{s+1}\hookrightarrow H^s$, where the displayed embeddings are compact. We first prove the spectral correspondence for $s=0$, so that $PQ$ acts on $H^{-1}$ and $QP=D_{k,n}A_{k,n}$ acts on $L^2$. For bounded maps $S:Y\to X$ and $T:X\to Y$, the identity
\[ (z-TS)^{-1}=z^{-1}\bigl[I+T(z-ST)^{-1}S\bigr],\qquad z\neq0, \]
and its counterpart with $S,T$ interchanged show that $\sigma(ST)\setminus\{0\}=\sigma(TS)\setminus\{0\}$. The intertwining identities $T(ST-z)^m=(TS-z)^mT$ and $S(TS-z)^m=(ST-z)^mS$, or equivalently the corresponding Riesz projections, show that Jordan block dimensions agree for any $z\neq0$. Apply this first to $P:L^2\to H^{-1}$ and $Q:H^{-1}\to L^2$, and then on $L^2$ with $S=D_{k,n}^{1/2}$ and $T=D_{k,n}^{1/2}A_{k,n}$. This gives
\[ \sigma(PQ)\setminus\{0\}=\sigma(QP)\setminus\{0\}=\sigma(K_{k,n})\setminus\{0\}. \]
Spectral mapping in \eqref{eq:squared-mode-operator} gives the asserted relation between $z$ and $\kappa$.

If $\kappa$ has algebraic multiplicity $m$ for $K_{k,n}$, then it has multiplicity $m$ for each diagonal product in \eqref{eq:squared-mode-operator}; hence $-n^2k^2\kappa$ has multiplicity $2m$ for $\mscr L_{k,n}^2$. The involution $J=\operatorname{diag}(I,-I)$ satisfies $J\mscr L_{k,n}J=-\mscr L_{k,n}$, so the generalized eigenspaces at the two roots $z$ and $-z$ have equal dimension. Their direct sum is the generalized eigenspace of $\mscr L_{k,n}^2$ for $z^2$, and each root therefore has algebraic multiplicity $m$.

It remains to justify that this description is independent of $s$. We claim that every generalized eigenvector of $\mscr L_{k,n}$ at $z\neq0$ is smooth. Indeed, $\mscr L_{k,n}^2$ gains two derivatives on both components, while its restriction to the finite-dimensional generalized eigenspace at $z$ is invertible; the inverse on that space is a polynomial in $\mscr L_{k,n}^2$. Thus $U=(\mscr L_{k,n}^2|_E)^{-1}\mscr L_{k,n}^2U$ gains two derivatives, and iteration gives $U\in C^\infty$. Consequently the generalized eigenspaces, their dimensions, and all nonzero spectral values are the same on every $X_n^s$.
\end{proof}

\begin{lemma} \label{lem:negative-index}
The number of negative eigenvalues of $K_{k,n}$, counted with multiplicity, is
\[ N_{k,n}=\#\{m\in\Z:m^2+n^2<k^2\}. \]
\end{lemma}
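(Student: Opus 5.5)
The count follows from Sylvester's law of inertia, applied in the generalized sense to congruence by the injective positive operator $D_{k,n}^{1/2}$. Since $K_{k,n}$ is compact and self-adjoint, its number of negative eigenvalues with multiplicity equals the maximal dimension of a subspace $V\subset L^2(\mbb T_{x_1})$ on which the quadratic form $\langle K_{k,n}f,f\rangle=\langle A_{k,n}g,g\rangle$, with $g=D_{k,n}^{1/2}f$, is negative definite. From \eqref{eq:full-symbols}, $A_{k,n}$ is a Fourier multiplier whose symbol is negative exactly on the finite set $S=\{m:m^2+n^2<k^2\}$. The symbol vanishes when $m^2+n^2=k^2$ and is positive otherwise. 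Hence $A_{k,n}$ has exactly $N_{k,n}=\#S$ negative eigenvalues, with negative eigenspace $E_-=\operatorname{span}\{e^{\im mx_1}:m\in S\}$. The plan is to show that the negative index of $K_{k,n}$ equals this number by comparing the two quadratic forms through the map $f\mapsto D_{k,n}^{1/2}f$.

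The upper bound is straightforward. Suppose $V$ has dimension greater than $N_{k,n}$ and the form of $K_{k,n}$ is negative definite on $V$. Since $D_{k,n}^{1/2}$ is injective, $W=D_{k,n}^{1/2}V$ has the same dimension, and $\langle A_{k,n}g,g\rangle<0$ for all nonzero $g\in W$. However, $W$ meets $E_-^\perp$ nontrivially by dimension count, and on $E_-^\perp$ the form of $A_{k,n}$ is nonnegative. This is a contradiction, so the negative index is at most $N_{k,n}$.

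The lower bound is the main obstacle, because $D_{k,n}^{1/2}$ is compact and not onto. To get $N_{k,n}$ negative directions I need the range of $D_{k,n}^{1/2}$ to approximate $E_-$. Injectivity and self-adjointness make this range dense in $L^2$, since its orthogonal complement is the kernel, which is trivial. For each basis vector $e^{\im mx_1}$ with $m\in S$, I choose $f_m$ such that $D_{k,n}^{1/2}f_m$ is within $\eta$ of $e^{\im mx_1}$ in $L^2$. Because $A_{k,n}$ is bounded, the Gram matrix $\bigl(\langle A_{k,n}D_{k,n}^{1/2}f_m,D_{k,n}^{1/2}f_{m'}\rangle\bigr)_{m,m'\in S}$ is within $O(\eta)$ of the diagonal matrix of the negative symbol values. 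For $\eta$ small it is therefore negative definite, and the $f_m$ are linearly independent. This gives an $N_{k,n}$-dimensional subspace on which the form of $K_{k,n}$ is negative definite, so $K_{k,n}$ has at least $N_{k,n}$ negative eigenvalues by min-max.

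The remaining points are routine. The kernel of $A_{k,n}$ contributes nothing negative and does not affect either bound. Density of the range of $D_{k,n}^{1/2}$ uses injectivity of $D_{k,n}$, which was noted above: $C_k$ is injective on $L^2$, and $B_{k,n}$ is positive and injective. Putting the two bounds together gives exactly $N_{k,n}$ negative eigenvalues, counted with multiplicity.
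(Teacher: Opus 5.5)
Your proof is correct and follows the paper's argument essentially step for step. The upper bound pushes a negative subspace of $K_{k,n}$ injectively through $D_{k,n}^{1/2}$ to a negative subspace for $A_{k,n}$, and the lower bound approximates a basis of $E_-$ by elements of the dense range of $D_{k,n}^{1/2}$ and pulls back. Your explicit codimension count and Gram-matrix perturbation just spell out details that the paper leaves implicit.
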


\begin{proof}
For every $g\in L^2(\mbb T_{x_1})$,
\[ \langle K_{k,n}g,g\rangle=\langle A_{k,n}D_{k,n}^{1/2}g,D_{k,n}^{1/2}g\rangle. \]
The negative subspace $E_-$ of $A_{k,n}$ is spanned by the modes with $m^2+n^2<k^2$ and has dimension $N_{k,n}$. If a subspace $G$ is strictly negative for $K_{k,n}$, then $D_{k,n}^{1/2}G$ is strictly negative for $A_{k,n}$ and has the same dimension because $D_{k,n}^{1/2}$ is injective. Hence the negative index of $K_{k,n}$ is at most $N_{k,n}$. Conversely, since $E_-$ is finite-dimensional, there is a constant $c>0$ such that $\langle A_{k,n}f,f\rangle\leq-c\norm f^2$ on $E_-$. The range of $D_{k,n}^{1/2}$ is dense because the operator is self-adjoint and injective. Approximate a basis of $E_-$ sufficiently closely by vectors in this range; continuity of the quadratic form preserves strict negativity and linear independence. Pulling those approximants back through $D_{k,n}^{1/2}$ gives an $N_{k,n}$-dimensional negative subspace for $K_{k,n}$. For a compact self-adjoint operator, this negative index coincides with the number of negative eigenvalues counted with multiplicity.
\end{proof}

Enumerate these eigenvalues as $\{-\nu_{k,n,j}\}_{j=1}^{N_{k,n}}$. Proposition \ref{prop:one-mode-spectrum} gives the positive eigenvalues of $\mscr L_{k,n}$:
\begin{equation} \label{eq:unstable-eigenvalues}
\lambda_{k,n,j}=|n|k\sqrt{\nu_{k,n,j}},\qquad 1\leq j\leq N_{k,n}.
\end{equation}

Multiplication by $C_k$ changes an $x_1$-frequency by $\pm k$. Consequently,
\[ L^2(\mbb T_{x_1})=\bigoplus_{r=0}^{k-1}\mcal H_{k,r},\qquad \mcal H_{k,r}=\overline{\operatorname{span}}\{e^{\im(r+jk)x_1}:j\in\Z\}, \]
and every $\mcal H_{k,r}$ is invariant under $A_{k,n}$, $B_{k,n}$, $D_{k,n}$, and $K_{k,n}$. The number of positive eigenvalues in the $(n,r)$ block is therefore
\[ N_{k,n,r}=\#\{j\in\Z:(r+jk)^2+n^2<k^2\}. \]

\begin{theorem} \label{thm:full-unstable-spectrum}
The spectrum of $\mscr L_k$ in the open right half-plane is a finite set of isolated positive real eigenvalues of finite algebraic multiplicity. More precisely,
\[ \sigma(\mscr L_k)\cap\{\operatorname{Re}z>0\}=\bigcup_{0 < |n| < k}\{\lambda_{k,n,j}:1\leq j\leq N_{k,n}\}. \]
The union involves only the finitely many transverse frequencies $0<|n|<k$. Thus the unstable spectrum of the full operator is discrete and no additional unstable spectrum is created by the direct sum over high transverse modes.
\end{theorem}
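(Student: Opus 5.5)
We plan to treat $\mscr L_k$ on $X^s$ as the orthogonal direct sum of the transverse blocks $\mscr L_{k,n}$, $n\in\Z$. The block $n=0$ vanishes identically, because of the factor $\del_{x_2}$. For $0<|n|<k$ the right-half-plane spectrum of each block is read off from Proposition \ref{prop:one-mode-spectrum} and Lemma \ref{lem:negative-index}. For $|n|\geq k$ we have $N_{k,n}=0$, so $K_{k,n}\geq0$, and every nonzero spectral value $z$ of $\mscr L_{k,n}$ satisfies $z^2=-n^2k^2\kappa$ with $\kappa\geq0$; such $z$ are purely imaginary. Hence each individual block contributes exactly the listed set to $\{\operatorname{Re}z>0\}$.

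The positive eigenvalues $\lambda_{k,n,j}$, together with their finite algebraic multiplicities, are eigenvalues of $\mscr L_k$ through the block embedding. The content of the theorem is the converse. For $\operatorname{Re}z>0$ outside the finite set, we must show that $z-\mscr L_k$ is invertible on $X^s$. The resolvent is the direct sum of the block resolvents, so it suffices to bound $\norm{(z-\mscr L_{k,n})^{-1}}_{X^s_n\to X^s_n}$ uniformly in $n$. The finitely many blocks with $|n|<k$ are compact perturbations of $z$, so their resolvents are bounded away from the finite eigenvalue set. For $|n|\geq k$ we aim for a coercive estimate that is uniform in $n$.

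For the high blocks we use the energy structure. On $X_n$, define
\[ \mcal E(\omega,F)=\langle B_{k,n}\omega,\omega\rangle+\langle A_{k,n}F,F\rangle, \]
with pairings taken in $L^2(\mbb T_{x_1})$. When $|n|\geq k$, the symbol of $A_{k,n}$ is nonnegative, and for $|n|>k$ it is $\geq c>0$ uniformly in $n$ and $m$. The symbol of $B_{k,n}$ is comparable to $(m^2+n^2)^{-1}$, so $\mcal E$ is equivalent to the $X^0_n$ norm. The identities $B_{k,n}P=B_{k,n}C_kA_{k,n}$ and $A_{k,n}Q=A_{k,n}C_kB_{k,n}$ show that $\mscr L_{k,n}$ is skew-adjoint with respect to $\mcal E$. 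Indeed,
\[ \langle B\,(-\im nkPF),\omega\rangle+\langle A\,(-\im nkQ\omega),F\rangle=-\im nk\bigl(\langle C_kAF,B\omega\rangle+\langle C_kB\omega,AF\rangle\bigr), \]
which is purely imaginary. Consequently $\operatorname{Re}\mcal E(\mscr L_{k,n}U,U)=0$, and we obtain
\[ \norm{(z-\mscr L_{k,n})^{-1}}_{\mcal E}\leq(\operatorname{Re}z)^{-1}. \]
The equivalence constants between $\mcal E$ and the $X^0_n$ norm must be uniform in $n$ once weights are matched. This requires care: $B_{k,n}\sim|D|^{-2}$ and $H^{-1}$ carries the weight $(1+m^2+n^2)^{-1}$, so the two match. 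For $|n|=k$ the symbol of $A$ vanishes at $m=0$. We handle that single block separately, since it is compact and its spectrum is already known to be imaginary.

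To pass from $X^0$ to $X^s$, we commute with $\langle D\rangle^{s}$, including $\del_{x_1}$ derivatives. The commutator $[\langle D\rangle^s,C_k]$ has order $s-1$ and carries no $n$-growth beyond what $\mscr L_{k,n}$ already has. Alternatively, we use the fact that $\mscr L_{k,n}$ smooths by one derivative in the appropriate component and bootstrap the resolvent bound from $X^0$ to $X^s$.

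The main obstacle is this uniformity: an $n$-independent resolvent bound on $X^s$ for $|n|>k$, combined with the fact that the lower-order weight mismatch produced by the factor $n$ in $\mscr L_{k,n}$ is absorbed. If the direct commutator approach fails, we would instead work in the weighted energy $\sum_{|\alpha|\leq s}\mcal E(\del^\alpha_{x_1}\omega,\del^\alpha_{x_1}F)$ plus $n^{2s}\mcal E$, and treat the commutator terms as bounded perturbations of size $O(1)$ times a constant depending on $s$. This would give invertibility for $\operatorname{Re}z>C_s$. The region $0<\operatorname{Re}z\leq C_s$ would then require combining the block resolvent estimates: each block is compact with imaginary nonzero spectrum, and the uniform $X^0$ bound, upgraded through the smoothing of $\mscr L_{k,n}^2$ used in Proposition \ref{prop:one-mode-spectrum}, should close the argument.
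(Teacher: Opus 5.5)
Your argument on $X^0$ is correct and is essentially the paper's proof. The ingredients match: the block decomposition; the energy $E_n(\omega,F)=\langle B_{k,n}\omega,\omega\rangle+\langle A_{k,n}F,F\rangle$, equivalent to the $X_n^0$ norm uniformly in $n$ for $|n|\geq k+1$; skew-adjointness, giving $\norm{(z-\mscr L_{k,n})^{-1}}_{X_n^0\to X_n^0}\leq C/\operatorname{Re}z$; and separate treatment of the finitely many compact blocks $|n|\leq k$, including $n=\pm k$ where the symbol of $A_{k,n}$ vanishes at $m=0$. The paper runs this only in $X_n^0$. It obtains $s$-independence of the spectrum separately in Proposition \ref{prop:sobolev-semigroup}, from Fredholm index zero off $\im[-k\sqrt{1+k^2},k\sqrt{1+k^2}]$ together with smoothness of kernel elements. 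So the step you call the main obstacle is not part of this proof.

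If you want the $X^s$ statement directly, your fallback would not close as written. An $O(1)$ commutator bound only gives invertibility for $\operatorname{Re}z>C_s$. The smoothing of $\mscr L_{k,n}^2$ is also not uniform in $n$: $n^2B_{k,n}$ has symbol $n^2(1+k^2)/(m^2+n^2)$, which does not decay for $|m|\lesssim|n|$.

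The missing observation is that the commutator is one order lower and therefore decays in $n$. Let $\Lambda$ be the multiplier $(1+m^2+n^2)^{1/2}$. Then $\Lambda^s$ is an isometry $X_n^s\to X_n^0$, and $\Lambda^sC_k\Lambda^{-s}=C_k+E$, where $E$ shifts frequencies by $\pm k$ with coefficients bounded by $C_{s,k}(1+m^2+n^2)^{-1/2}$. Hence $\Lambda^s\mscr L_{k,n}\Lambda^{-s}=\mscr L_{k,n}+\mcal R_n$ on $X_n^0$ with $\norm{\mcal R_n}\leq C_{s,k}|n|^{-1}$. The reason is that in each off-diagonal entry the prefactor $n$ now comes with a factor at most $C(1+m^2+n^2)^{-1}$.

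A Neumann series from your $X_n^0$ bound then gives $\norm{(z-\mscr L_{k,n})^{-1}}_{X_n^s\to X_n^s}\leq 2C/\operatorname{Re}z$ for all $|n|\geq\max\{k+1,2CC_{s,k}/\operatorname{Re}z\}$. The finitely many remaining blocks are compact, with $s$-independent nonzero spectrum by Proposition \ref{prop:one-mode-spectrum}. This blockwise route is more elementary than the paper's pseudodifferential one; the paper's route, in exchange, also identifies the essential spectrum.
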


\begin{proof}
The $n=0$ block vanishes. If $|n|\geq k$, then $A_{k,n} \succeq 0$, hence $K_{k,n}\succeq 0$ and Proposition \ref{prop:one-mode-spectrum} excludes unstable spectrum. For $0<|n|<k$, Proposition \ref{prop:one-mode-spectrum} and Lemma \ref{lem:negative-index} give exactly the finite list in the theorem.

It remains to rule out additional spectrum arising from the infinite direct sum over the transverse frequency $n$. For $|n|\geq k+1$, set
\[ E_n(\omega,F)=\langle B_{k,n}\omega,\omega\rangle_{H^1 \times H^{-1}} +\langle A_{k,n}F,F\rangle_{L^2 \times L^2}. \]
Write $\omega=\sum_{m\in\Z}\omega_me^{\im mx_1}$ and $F=\sum_{m\in\Z}F_me^{\im mx_1}$. Then
\[ E_n(\omega,F)=\sum_{m\in\Z}\left\{\frac{1+k^2}{m^2+n^2}|\omega_m|^2+\frac{m^2+n^2-k^2}{m^2+n^2+1}|F_m|^2\right\}. \]
Recall that
\[ \norm{(\omega,F)}_{X_n^0}^2=\sum_{m\in\Z}\left\{\frac{|\omega_m|^2}{1+m^2+n^2}+|F_m|^2\right\}. \]
For $r=m^2+n^2\geq(k+1)^2$, one has
\[ 1+k^2\leq(1+k^2)\frac{1+r}{r}\leq(1+k^2)\left(1+\frac{1}{(k+1)^2}\right),\qquad \frac{2k+1}{k^2+2k+2}\leq\frac{r-k^2}{r+1}<1. \]
Consequently there are constants $c,C>0$, depending only on $k$ and independent of $n$, such that
\[ c\norm{(\omega,F)}_{X_n^0}^2\leq E_n(\omega,F)\leq C\norm{(\omega,F)}_{X_n^0}^2,\qquad |n|\geq k+1. \]
Let $\langle\cdot,\cdot\rangle_{E_n}$ denote the inner product obtained by polarizing $E_n$. The bounded operator $\mscr L_{k,n}$ is skew-adjoint in this equivalent Hilbert norm, so $z-\mscr L_{k,n}$ is invertible whenever $\operatorname{Re}z\neq0$. Given $V\in X_n^0$, set $U=(z-\mscr L_{k,n})^{-1}V$. Taking the $E_n$ inner product of $(z-\mscr L_{k,n})U=V$ with $U$ and then taking real parts gives
\[ |\operatorname{Re}z|E_n(U)=\left|\operatorname{Re}\langle V,U\rangle_{E_n}\right|\leq E_n(V)^{1/2}E_n(U)^{1/2}. \]
Hence, uniformly for $|n|\geq k+1$,
\[ \norm{(z-\mscr L_{k,n})^{-1}}_{X_n^0\to X_n^0}\leq\frac{C}{|\operatorname{Re}z|},\qquad |n|\geq k+1. \]
For an orthogonal direct sum, $z\in\rho(\bigoplus_n\mscr L_{k,n})$ if and only if $z\in\rho(\mscr L_{k,n})$ for every $n$ and $\sup_n\norm{(z-\mscr L_{k,n})^{-1}}<\infty$. The preceding estimate is uniform on all high blocks $|n|\geq k+1$, and only finitely many blocks satisfy $|n|\leq k$. The two boundary blocks $|n|=k$ are not covered by the coercive energy because the symbol of $A_{k,n}$ vanishes at $m=0$; nevertheless, they are compact, and $A_{k,n}\succeq0$ implies $K_{k,n}\succeq0$, so Proposition \ref{prop:one-mode-spectrum} places their nonzero spectrum on the imaginary axis. Their resolvent norms are therefore finite for each fixed $z$ with $\operatorname{Re}z>0$. The same is immediate for the zero block, while the remaining finitely many blocks $0<|n|<k$ have precisely the positive eigenvalues listed in \eqref{eq:unstable-eigenvalues}. Thus every spectral point with positive real part comes from one of these finitely many compact blocks, and every such point is an isolated eigenvalue of finite algebraic multiplicity for the full operator.
\end{proof}

\begin{remark}
Write $X^0=\bigoplus_{n\in\Z}X_n^0$ and $\mscr L_k=\bigoplus_{n\in\Z}\mscr L_{k,n}$. Although every fixed block is compact, the full direct sum is compact only if $\norm{\mscr L_{k,n}}_{X_n^0\to X_n^0}\to0$. This necessary condition fails. Indeed, with unit vectors $U_n=(0,e^{\im nx_2})$ in $X^0$,
\[ \mscr L_kU_n=\left(-\im kn\frac{n^2-k^2}{n^2+1}\cos(kx_1)e^{\im nx_2},0\right),\qquad \norm{\mscr L_kU_n}_{X^0}^2=\frac{k^2n^2}{2(1+k^2+n^2)}\left(\frac{n^2-k^2}{n^2+1}\right)^2\longrightarrow\frac{k^2}{2}. \]
The vectors $\mscr L_kU_n$ are mutually orthogonal for distinct $n$, so they have no convergent subsequence and $\mscr L_k$ is not compact. The blockwise compact operators $K_{k,n}$ vary with $n$, and the spectral relation contains the compensating factor $n^2k^2$; accordingly, the full operator has a nontrivial essential spectrum even though each fixed block is compact.

Essential spectrum may be produced by the loss of a uniform resolvent bound as $|n|\to\infty$, even though every fixed block has discrete nonzero spectrum. This high-frequency mechanism gives the imaginary interval in Proposition \ref{prop:sobolev-semigroup} but does not affect the unstable half-plane.
\end{remark}

\subsection{Essential spectrum, Sobolev invariance, and semigroup bounds}

\begin{proposition} \label{prop:sobolev-semigroup}
The spectrum of $\mscr L_k$ is the same on every $X^s$ for $s\geq0$. With the Fredholm definition of essential spectrum,
\[ \sigma_{\mathrm{ess}}(\mscr L_k)=\im[-k\sqrt{1+k^2},k\sqrt{1+k^2}]. \]
If $Y^s\subset X^s$ is a closed invariant subspace defined by Fourier modes or the reflections used below, and if the restricted operator has spectral bound $\Lambda$, then for every $s\geq0$ and $\eta>0$ there is a constant $C_{k,Y,s,\eta}$ such that
\begin{equation} \label{eq:full-semigroup-bound}
\norm{e^{t\mscr L_k}}_{Y^s\to Y^s}\leq C_{k,Y,s,\eta}e^{(\Lambda+\eta)t},\qquad t\geq0.
\end{equation}
\end{proposition}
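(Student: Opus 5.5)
We prove the three assertions in order: Sobolev invariance of the spectrum, then the essential spectrum, then the semigroup bound, which will be a soft consequence of the first two.

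\emph{Step 1: invariance in $s$.} Let $\Lambda^s=(1-\Delta)^{s/2}$ and set $W_s=\operatorname{diag}(\Lambda^{s-1},\Lambda^s)$. Then $W_s$ is an isometry of $X^s$ onto $X^0$, so $\mscr L_k$ on $X^s$ is unitarily equivalent to $\widetilde{\mscr L}_{k,s}=W_s\mscr L_kW_s^{-1}$ on $X^0$. The Fourier multipliers $\del_{x_2}$, $A_k$, $B_k$ commute with $\Lambda^s$; only multiplication by $C_k=\cos(kx_1)$ does not. Hence
\[ \widetilde{\mscr L}_{k,s}-\mscr L_k=-k\,\Bigl(W_s[C_k,\,\cdot\,]\text{-commutator terms}\Bigr)\,\del_{x_2}\begin{pmatrix}0&A_k\\B_k&0\end{pmatrix}W_s^{-1}. \]
The commutators $[\Lambda^{\sigma},C_k]\Lambda^{-\sigma}$ have order $-1$. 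Counting orders in each entry (the $A_k$ entry has net order $0$, the $B_k$ entry net order $-2$), each term of the difference has order at most $-1$ on $L^2(\mbb T^2)$, and so is compact on $X^0$. Therefore the Fredholm essential spectrum is the same for every $s$.

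Outside the essential spectrum, spectral points are isolated eigenvalues of finite multiplicity. They lie in some block $n$, and their generalized eigenvectors are smooth by Proposition \ref{prop:one-mode-spectrum}. A point of $\sigma(\mscr L_k)$ off $\sigma_{\mathrm{ess}}$ on $X^s$ is therefore also an eigenvalue on $X^0$, and conversely. Since the spectrum is the essential spectrum together with these eigenvalues, the full spectrum is $s$-independent. The symmetry subspaces $Y^s$ are defined by Fourier modes or by reflections; both commute with $\Lambda^s$, so the same argument applies to the restrictions.

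\emph{Step 2: essential spectrum.} On $X^0$, conjugate the $\omega$-component by $\Lambda^{-1}$, that is, replace $\omega$ by $\Lambda^{-1}\omega\in L^2$. Then $\mscr L_k$ becomes, modulo compact operators, a classical zeroth-order $2\times2$ pseudodifferential system on $\mbb T^2$ with principal symbol
\[ \sigma_0(x,\xi)=-\im k\cos(kx_1)\frac{\xi_2}{|\xi|}\begin{pmatrix}0&1\\1+k^2&0\end{pmatrix}, \]
using $A_k\sim I$ and $|\xi|\,B_k\,|\xi|\sim(1+k^2)$. The eigenvalues of $\sigma_0(x,\xi)$ are
\[ \pm\im k\sqrt{1+k^2}\,\cos(kx_1)\,\frac{\xi_2}{|\xi|}. \]
As $(x,\xi)$ ranges over the cosphere bundle, these values fill exactly $\im[-k\sqrt{1+k^2},k\sqrt{1+k^2}]$. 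We then invoke the standard criterion: a zeroth-order classical system $P$ on a compact manifold is Fredholm if and only if its principal symbol is invertible on the cosphere bundle. Applied to $z-\widetilde{\mscr L}$, this identifies $\sigma_{\mathrm{ess}}$ with the closure of the set of symbol eigenvalues, which is the interval above. I expect this step to be the main obstacle. The symbol-order bookkeeping for the mixed $H^{-1}\times L^2$ scale must be done carefully. One must also justify applying the Fredholm criterion, rather than merely the inclusion ``symbol invertible $\Rightarrow$ Fredholm'', since the symbol is not elliptic in the usual sense. I would get the reverse inclusion from Weyl sequences: wave packets concentrated at a point $(x_1^0,\xi^0)$ in the cosphere bundle, built from the corresponding symbol eigenvector, give approximate eigenfunctions that converge weakly to zero.

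\emph{Step 3: semigroup bound.} Since $\mscr L_k$ is bounded on $Y^s$, it generates a uniformly continuous group, and the spectral mapping theorem $\sigma(e^{t\mscr L_k})=e^{t\sigma(\mscr L_k|_{Y^s})}$ holds for bounded generators. By Step 1 the spectral bound $\Lambda$ is the same on every $Y^s$, so the spectral radius of $e^{\mscr L_k}$ on $Y^s$ is $e^{\Lambda}$. Gelfand's formula then gives
\[ \norm{e^{N\mscr L_k}}\leq C_\eta e^{(\Lambda+\eta)N} \quad\text{for all integers } N\geq0. \]
Finally, $\sup_{0\leq\tau\leq1}\norm{e^{\tau\mscr L_k}}<\infty$, and writing $t=N+\tau$ yields \eqref{eq:full-semigroup-bound}.
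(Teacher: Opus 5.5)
Your proposal is correct and follows essentially the same route as the paper. Both arguments conjugate by Bessel potentials to an order-zero pseudodifferential system with the same principal symbol, use ellipticity off the imaginary segment and coherent-state Weyl sequences on it, transfer the essential spectrum across $s$ via an order $-1$ commutator, use smoothness of the eigenvectors off the segment, and obtain the semigroup bound from spectral mapping and Gelfand's formula. The one step to state explicitly is that spectral points off $\sigma_{\mathrm{ess}}(\mscr L_k)$ are eigenvalues only because the index of $z-\mscr L_k$ vanishes there; as in the paper, this follows from connectedness of $\C\setminus\im[-k\sqrt{1+k^2},k\sqrt{1+k^2}]$ and invertibility for large $|z|$. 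Also note that $\operatorname{diag}(\Lambda^{s-1},\Lambda^s)$ maps $X^s$ onto $L^2\times L^2$, not onto $X^0$.
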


\begin{proof}
Extend $\Delta^{-1}$ by zero on the constant Fourier mode and let $\Lambda_0=(1-\Delta)^{1/2}$. This defines a classical pseudodifferential realization on all of $L^2\times L^2$ after the identification $(\omega,F)\mapsto(\Lambda_0^{-1}\omega,F)$. Restriction to the invariant mean-zero vorticity space, or any other choice on the constant mode, changes only a finite-dimensional summand and does not alter the Fredholm essential spectrum or the index. The resulting order-zero system $\widetilde{\mscr L}_k$ has principal symbol
\[ -\im kC_k(x_1)\frac{\xi_2}{|\xi|}\begin{pmatrix}0&1\\1+k^2&0\end{pmatrix}, \]
whose eigenvalues range over the interval $I_k=\im[-k\sqrt{1+k^2},k\sqrt{1+k^2}]$ on the cosphere bundle.

If $z\notin I_k$, then $z-\widetilde{\mscr L}_k$ is elliptic. Standard order-zero pseudodifferential Fredholm theory~\cite{Shvydkoy2006} gives $G_z\in\Psi^0$ such that
\[ G_z(z-\widetilde{\mscr L}_k)=I-R_{z,1},\qquad (z-\widetilde{\mscr L}_k)G_z=I-R_{z,2},\qquad R_{z,j}\in\Psi^{-1}. \]
The remainders are compact on $L^2\times L^2$ because they map into $H^1$. By Atkinson's theorem, a two-sided inverse modulo compact operators makes $z-\widetilde{\mscr L}_k$ Fredholm. Its index is zero: the index is locally constant on the connected set $\C\setminus I_k$, and the operator is invertible for large $|z|$.

If $z\in I_k$, choose a characteristic point $(x^0,\xi^0)$ and a null vector $v$ of the principal symbol of $z-\widetilde{\mscr L}_k$. Normalized coherent states of the form $h^{-1/2}\chi((x-x^0)/\sqrt h)e^{\im(x-x^0)\cdot\xi^0/h}v$, including at the endpoints of the symbol range, satisfy
\[ u_h\rightharpoonup0,\qquad \norm{(z-\widetilde{\mscr L}_k)u_h}_{L^2}=O(\sqrt h). \]
A Fredholm operator $T$ satisfies $\norm{u}\leq C\norm{Tu}+\norm{\Pi u}$, where $\Pi$ projects onto its finite-dimensional kernel. Since $\Pi u_h\to0$, this estimate contradicts $\norm{u_h}=1$. Projecting $u_h$ onto mean-zero vorticity changes it by $o(1)$ and gives the same contradiction on $X^0$. Hence $z-\widetilde{\mscr L}_k$ is not Fredholm, proving $\sigma_{\mathrm{ess}}(\mscr L_k)=I_k$.

For $s\geq0$, identify $X^s$ with $L^2\times L^2$ by $(\omega,F)\mapsto(\Lambda_0^{s-1}\omega,\Lambda_0^sF)$. The corresponding realization differs from $\widetilde{\mscr L}_k$ by a commutator term of order $-1$, schematically
\[ [\Lambda_0^s,\widetilde{\mscr L}_k]\Lambda_0^{-s}\in\Psi^{-1}, \]
hence by a compact operator. Thus Fredholmness, index, and the essential spectrum are unchanged. Outside $I_k$, elliptic regularity shows that every kernel element is smooth, so the realizations of $\mscr L_k-\lambda I$ on $X^s$ and $X^{s'}$ have the same kernel for any $0\leq s<s'$. Since their index is zero, they are simultaneously invertible. Points of $I_k$ belong to the essential spectrum on every realization, so the full spectrum is independent of $X^s$.

Finally, let $A=\mscr L_k|_{Y^s}$. The operator $A$ is bounded. If its spectral bound is $\Lambda$, the spectral mapping theorem and the spectral-radius formula give $r(e^A)=e^\Lambda$ and, for each $\eta>0$, a constant $C_\eta$ such that $\norm{e^{mA}}\leq C_\eta e^{(\Lambda+\eta)m}$ for every integer $m\geq0$. Writing $t=m+\tau$ with $0\leq\tau<1$ and using $\sup_{0\leq\tau\leq1}\norm{e^{\tau A}}<\infty$ proves \eqref{eq:full-semigroup-bound}. The Fourier and reflection projections commute with the Sobolev identifications above, so the same regularity argument makes $\Lambda$ independent of $s$ for the subspaces used in this article.
\end{proof}

\subsection{An invariant symmetry class}

Let $R_1(x_1,x_2)=(\frac{\pi}{k} - x_1,x_2)$ and $R_2(x_1,x_2)=(x_1,-x_2)$. Each reflection acts on perturbations by
\[ (\omega,F)(x)\longmapsto(-\omega(Rx),F(Rx)), \]
and similarly on $(\psi,\Phi)$. These actions and the translation by $2\pi/k$ in $x_1$ preserve both the system and the equilibrium. Hence the nonlinear flow preserves their common fixed-point space:
\begin{equation} \label{eq:nonlinear-symmetry-class}
\begin{aligned}
(\omega,F)(x_1+2\pi/k,x_2)&=(\omega,F)(x_1,x_2),\\
\omega(\pi/k-x_1,x_2)&=-\omega(x_1,x_2),& F(\pi/k-x_1,x_2)&=F(x_1,x_2),\\
\omega(x_1,-x_2)&=-\omega(x_1,x_2),& F(x_1,-x_2)&=F(x_1,x_2).
\end{aligned}
\end{equation}
Denote this space by $X^s_{\mathrm{sym}}$; it is invariant under the full nonlinear flow and $\mscr L_k$. Its $x_2$ parity conditions require $\sin(nx_2)$ for $\omega$ and $\cos(nx_2)$ for $F$. For each $n\geq1$, the $x_1$ parity gives two invariant chains. The cosine chain is
\begin{equation} \label{eq:unstable-parity-chain}
F\in\overline{\operatorname{span}}\{\cos(2jkx_1):j\geq0\},\qquad \omega\in\overline{\operatorname{span}}\{\cos((2j+1)kx_1):j\geq0\}.
\end{equation}
The complementary, spectrally non-growing chain is
\begin{equation} \label{eq:stable-parity-chain}
F\in\overline{\operatorname{span}}\{\sin((2j+1)kx_1):j\geq0\},\qquad \omega\in\overline{\operatorname{span}}\{\sin(2jkx_1):j\geq1\}.
\end{equation}
The Fourier multipliers preserve frequency, while multiplication by $C_k$ changes the $x_1$-frequency by $\pm k$; hence both chains are invariant. The sine chain has no spectrum in the open right half-plane, so all later unstable-mode calculations may be restricted to \eqref{eq:unstable-parity-chain}.

\begin{lemma} \label{lem:sine-chain-stability}
For every $n\geq1$, the restriction of $\mscr L_k$ to the subspace in \eqref{eq:stable-parity-chain} has no spectral value with positive real part.
\end{lemma}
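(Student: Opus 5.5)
The plan is to redo the self-adjoint reduction of Proposition~\ref{prop:one-mode-spectrum} inside the sine chain \eqref{eq:stable-parity-chain}. There, the order-zero multiplier $A_{k,n}$ is positive definite on the $F$-component, so the reduced operator is nonnegative and no eigenvalue can leave the imaginary axis.

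\textbf{Reduction to single blocks.} Fix $n\geq1$. The operator $\mscr L_k$ acts blockwise in the transverse frequency, and the factors $\sin(nx_2)$, $\cos(nx_2)$ are combinations of $e^{\pm\im nx_2}$. It therefore suffices to study $\mscr L_{k,\pm n}$ on
\[
W_n=\overline{\operatorname{span}}\{\sin(2jkx_1):j\geq1\}\times\overline{\operatorname{span}}\{\sin((2j+1)kx_1):j\geq0\}\subset X_{\pm n}^s.
\]
The subspace $W_n$ is invariant by the frequency-shift argument already given. The restriction is compact, since it is the restriction of a compact operator to a closed invariant subspace. Hence its nonzero spectrum consists of eigenvalues, and it is enough to exclude eigenvalues $z$ with $\operatorname{Re}z>0$. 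As in \eqref{eq:squared-mode-operator}, such a $z$ gives $z^2\neq0$ as an eigenvalue of $QP$ restricted to the $F$-space $W_n^F=\overline{\operatorname{span}}\{\sin((2j+1)kx_1)\}$. This uses the identity $\sigma(PQ)\setminus\{0\}=\sigma(QP)\setminus\{0\}$ for the restricted maps $P:W_n^F\to W_n^\omega$ and $Q:W_n^\omega\to W_n^F$; the argument in Proposition~\ref{prop:one-mode-spectrum} applies verbatim, and smoothness lets us work at $s=0$.

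\textbf{Sign of the reduced operator.} On $W_n^F$, every $x_1$-frequency satisfies $|m|=(2j+1)k\geq k$. So by \eqref{eq:full-symbols}, the symbol of $A_{k,n}$ satisfies
\[
\frac{m^2+n^2-k^2}{m^2+n^2+1}\geq\frac{n^2}{m^2+n^2+1}>0 .
\]
Thus $A_{k,n}|_{W_n^F}$ is bounded, self-adjoint, positive and injective, and it has a positive square root $A^{1/2}$ preserving $W_n^F$. Next, $D_{k,n}=C_kB_{k,n}C_k$ maps $W_n^F$ to itself: $C_k$ sends odd multiples of $k$ in sine to even multiples, $B_{k,n}$ preserves frequencies, and $C_k$ sends them back. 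Also $D_{k,n}$ is positive there. Since $QP=D_{k,n}A_{k,n}$, the same $ST/TS$ identity with $S=A^{1/2}$ and $T=D_{k,n}A^{1/2}$ gives
\[
\sigma(QP|_{W_n^F})\setminus\{0\}=\sigma(A^{1/2}D_{k,n}A^{1/2})\setminus\{0\}\subset(0,\infty).
\]
Hence $z^2=-n^2k^2\kappa$ with $\kappa>0$, so $z$ is purely imaginary. This contradicts $\operatorname{Re}z>0$.

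\textbf{Main obstacle.} The step needing the most care is the bookkeeping: checking that every operator used ($P$, $Q$, $D_{k,n}$ and $A^{1/2}$) genuinely preserves the chain. In particular, the real sine/cosine subspaces in $x_2$ must be handled as a sum of the complex blocks $\pm n$, so that the blockwise spectral correspondence legitimately applies to the restriction. Once this is settled, the positivity of the symbol for $|m|\geq k$ makes the argument immediate. Note that the restriction $j\geq1$ in the $\omega$-chain is what keeps the mean-zero condition harmless.
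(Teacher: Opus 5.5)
Your proof is correct, but it takes a different route from the paper.

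The paper works directly with the real first-order block system $\partial_t h=nkC_kA_{k,n}f$, $\partial_t f=-nkC_kB_{k,n}h$. It shows that the positive-definite form $E_n(h,f)=\langle B_{k,n}h,h\rangle+\langle A_{k,n}f,f\rangle$ is conserved. For an eigenvector this gives $e^{2\operatorname{Re}z\,t}E_n=E_n$, hence $\operatorname{Re}z=0$. Compactness of the block (Proposition~\ref{prop:one-mode-spectrum}) then extends this to the whole nonzero spectrum.

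You instead rerun the self-adjoint reduction of Proposition~\ref{prop:one-mode-spectrum} with the roles of $A_{k,n}$ and $D_{k,n}$ interchanged. Because $A_{k,n}$ is positive on odd multiples of $k$, the factorization $QP=(D_{k,n}A^{1/2})A^{1/2}$ places the nonzero spectrum of $QP$ inside that of the nonnegative operator $A^{1/2}D_{k,n}A^{1/2}$. This forces $z^2<0$.

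Both proofs rest on the same key fact, positivity of the symbol of $A_{k,n}$ for $|m|\geq k$, and both use compactness to reduce to eigenvalues. What each buys:
\begin{itemize}
\item The energy argument is shorter and avoids square roots and the $ST/TS$ transfer. It also shows that the restricted group is unitary in an equivalent norm. That is a time-uniform dynamical bound, stronger than the spectral statement, and it is the same mechanism as the coercive-energy argument for $|n|\geq k+1$ in Theorem~\ref{thm:full-unstable-spectrum}.
\item Your reduction locates the nonzero sine-chain spectrum exactly, as $\pm\mathrm{i}nk\sqrt{\kappa}$ with $\kappa$ ranging over the nonzero eigenvalues of a compact nonnegative self-adjoint operator. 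It thus mirrors the cosine-chain analysis.
\end{itemize}

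A small point of notation: it is $-z^2/(n^2k^2)$, not $z^2$, that is an eigenvalue of $QP$ (or of $PQ$, whichever component of the eigenvector is nonzero). Your final line uses this correctly.
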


\begin{proof}
Write $\omega=h(x_1)\sin(nx_2)$ and $F=f(x_1)\cos(nx_2)$. On the sine chain, \eqref{eq:full-symbols} gives
\[ A_{k,n}\sin((2j+1)kx_1)=\frac{4j(j+1)k^2+n^2}{(2j+1)^2k^2+n^2+1}\sin((2j+1)kx_1),\qquad j\geq0, \]
which is positive. Likewise,
\[ B_{k,n}\sin(2jkx_1)=\frac{1+k^2}{4j^2k^2+n^2}\sin(2jkx_1),\qquad j\geq1, \]
is positive. The real block equations are
\[ \del_t h=nkC_kA_{k,n}f,\qquad \del_t f=-nkC_kB_{k,n}h. \]
Thus the positive quadratic form
\[ E_n(h,f)=\langle B_{k,n}h,h\rangle_{L^2}+\langle A_{k,n}f,f\rangle_{L^2} \]
is conserved:
\[ \frac12\frac{d}{dt}E_n(h,f)=nk\operatorname{Re}\langle B_{k,n}h,C_kA_{k,n}f\rangle-nk\operatorname{Re}\langle A_{k,n}f,C_kB_{k,n}h\rangle=0. \]
For a nonzero $z$-eigenvector, conservation gives $e^{2\operatorname{Re}z t}E_n=E_n$; positivity forces $\operatorname{Re}z=0$. Proposition \ref{prop:one-mode-spectrum} says that every nonzero spectral point of this compact block is an eigenvalue. Hence the sine-chain spectrum lies on the imaginary axis, possibly including zero. The invariant direct-sum decomposition \eqref{eq:unstable-parity-chain}--\eqref{eq:stable-parity-chain} then places every unstable eigenfunction in the cosine chain.
\end{proof}

\begin{proposition} \label{prop:symmetry-branches}
Within the symmetry class \eqref{eq:nonlinear-symmetry-class}, the chain \eqref{eq:unstable-parity-chain} has exactly one positive eigenvalue $\lambda_{k,n}$ when $1\leq n<k$ and none when $n\geq k$. The positive eigenvalue is algebraically simple within its transverse frequency block. If $K_{k,n}^{\mathrm{ev}}$ denotes the restriction of $K_{k,n}$ to the even-cosine $F$ space in \eqref{eq:unstable-parity-chain}, and
\[ K_{k,n}^{\mathrm{ev}}g_{k,n}=-\nu_{k,n}g_{k,n},\qquad f_{k,n}=D_{k,n}^{1/2}g_{k,n}, \]
then a real eigenfunction is
\begin{equation} \label{eq:real-branch-eigenfunction}
F_{k,n}=f_{k,n}(x_1)\cos(nx_2),\qquad \omega_{k,n}=\frac{nk}{\lambda_{k,n}}C_kA_{k,n}f_{k,n}(x_1)\sin(nx_2),\qquad \lambda_{k,n}=nk\sqrt{\nu_{k,n}}.
\end{equation}
\end{proposition}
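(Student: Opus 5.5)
We will reduce the statement to the negative index of the compact self-adjoint operator $K_{k,n}^{\mathrm{ev}}$, in the same way Lemma \ref{lem:negative-index} handles the full block. The reduction rests on three observations.

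First, on the chain \eqref{eq:unstable-parity-chain} the $F$ component lives in $\mcal E=\overline{\operatorname{span}}\{\cos(2jkx_1):j\geq0\}$ and the $\omega$ component in $\mcal O=\overline{\operatorname{span}}\{\cos((2j+1)kx_1):j\geq0\}$.

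Second, $C_k$ maps $\mcal E\to\mcal O$ and $\mcal O\to\mcal E$, while $A_{k,n}$ and $B_{k,n}$ preserve both spaces. Hence $D_{k,n}=C_kB_{k,n}C_k$ maps $\mcal E$ to itself, and so do $D_{k,n}^{1/2}$ and $K_{k,n}$.

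Third, write $\omega=h(x_1)\sin(nx_2)$ and $F=f(x_1)\cos(nx_2)$ as in Lemma \ref{lem:sine-chain-stability}. The block becomes the real system
\[ \del_th=nkC_kA_{k,n}f,\qquad \del_tf=-nkC_kB_{k,n}h. \]
Its square acts on $f$ by $-n^2k^2\,C_kB_{k,n}C_kA_{k,n}=-n^2k^2D_{k,n}A_{k,n}$, restricted to $\mcal E$.

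The argument of Proposition \ref{prop:one-mode-spectrum} then applies verbatim on this invariant chain. The swap identity $\sigma(ST)\setminus\{0\}=\sigma(TS)\setminus\{0\}$, the matching of Jordan blocks, and the involution $J$ together show the following: positive eigenvalues $\lambda$ of the chain correspond bijectively, with equal algebraic multiplicity, to negative eigenvalues $-\nu$ of $K_{k,n}^{\mathrm{ev}}$ via $\lambda=nk\sqrt\nu$.

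Next we count negative eigenvalues by repeating the proof of Lemma \ref{lem:negative-index} inside $\mcal E$. The count equals the number of modes of $\mcal E$ on which the symbol of $A_{k,n}$ is negative. The cosine basis of $\mcal E$ consists of $\cos(2jkx_1)$ with symbol $\frac{4j^2k^2+n^2-k^2}{4j^2k^2+n^2+1}$. For $j\geq1$ this is positive, since $4j^2k^2>k^2$. At $j=0$ it is negative exactly when $n<k$.

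The density step needs one check: $D_{k,n}^{1/2}$ must be injective with dense range on $\mcal E$. This holds because $D_{k,n}$ is injective, self-adjoint and positive on $L^2$, and $\mcal E$ reduces it.

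The count therefore gives exactly one negative eigenvalue, which is simple, when $1\leq n<k$, and none when $n\geq k$. When $n\geq k$ we have $A_{k,n}\succeq0$ on $\mcal E$, so any nonzero spectrum lies on the imaginary axis. Simplicity of $-\nu_{k,n}$ for the self-adjoint $K_{k,n}^{\mathrm{ev}}$ transfers to algebraic simplicity of $\lambda_{k,n}$ within the block $\{\cos(nx_2),\sin(nx_2)\}$. The sine chain contributes nothing, by Lemma \ref{lem:sine-chain-stability}.

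Finally we verify \eqref{eq:real-branch-eigenfunction} directly. Set $f=D_{k,n}^{1/2}g$. Then
\[ D_{k,n}A_{k,n}f=D_{k,n}^{1/2}K_{k,n}g=-\nu f. \]
Define $h=\frac{nk}{\lambda}C_kA_{k,n}f$, so that $\lambda h=nkC_kA_{k,n}f$ holds by construction. For the second equation,
\[ -nkC_kB_{k,n}h=-\frac{n^2k^2}{\lambda}D_{k,n}A_{k,n}f=\frac{n^2k^2\nu}{\lambda}f=\lambda f, \]
using $\lambda^2=n^2k^2\nu$.

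The eigenfunction is nonzero because $f\neq0$, by injectivity of $D_{k,n}^{1/2}$. It is real because $K_{k,n}^{\mathrm{ev}}$ commutes with complex conjugation, so $g$ can be chosen real. It is smooth by Proposition \ref{prop:one-mode-spectrum}.

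The main point needing care is the multiplicity transfer on the invariant subchain rather than the full block. Here we must check that the Riesz projection for $z$ of the chain operator is the restriction of that of $\mscr L_{k,n}$, which follows from invariance of the chain. We must also check that the $J$-symmetry splitting into $\pm z$ still applies, which holds because $J$ preserves the chain.
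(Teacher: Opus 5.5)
Your proposal is correct and takes essentially the same route as the paper. You restrict to the reducing even-cosine space, rerun both inequalities of Lemma~\ref{lem:negative-index} there (using injectivity and dense range of $D_{k,n}^{1/2}$) to get negative index one for $K_{k,n}^{\mathrm{ev}}$, transfer to the chain via the argument of Proposition~\ref{prop:one-mode-spectrum}, and read off \eqref{eq:real-branch-eigenfunction} from $D_{k,n}A_{k,n}f_{k,n}=-\nu_{k,n}f_{k,n}$. The only difference is cosmetic: the paper gets algebraic simplicity from the factorization $\mscr L_{k,n}^2-\lambda_{k,n}^2I=(\mscr L_{k,n}-\lambda_{k,n}I)(\mscr L_{k,n}+\lambda_{k,n}I)$ and nontriviality of both generalized eigenspaces, whereas you use the involution $J$; the two are equivalent, and your explicit exclusion of the sine chain via Lemma~\ref{lem:sine-chain-stability} is a useful addition.
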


\begin{proof}
For $1\leq n<k$, the symbols of $A_{k,n}$ on the even-cosine $F$ space are
\[ \frac{n^2-k^2}{n^2+1}<0\quad\text{on }1,\qquad \frac{4j^2k^2+n^2-k^2}{4j^2k^2+n^2+1}>0\quad\text{on }\cos(2jkx_1),\quad j\geq1. \]
Thus $A_{k,n}$ has negative index one. The even-cosine and odd-cosine spaces reduce $D_{k,n}=C_kB_{k,n}C_k$, and therefore also reduce $D_{k,n}^{1/2}$ by functional calculus. The restriction of $D_{k,n}$ to the even-cosine space is injective, so the range of its square root is dense there. Applying both inequalities in the proof of Lemma \ref{lem:negative-index} within this reducing space shows that $K_{k,n}^{\mathrm{ev}}$ has negative index one. Compact self-adjoint spectral theory therefore gives exactly one negative eigenvalue counted with multiplicity, so it is simple. For $n\geq k$, all symbols are nonnegative and no unstable eigenvalue exists. The relation $D_{k,n}A_{k,n}f_{k,n}=-\nu_{k,n}f_{k,n}$ gives \eqref{eq:real-branch-eigenfunction}.

For algebraic simplicity, Proposition \ref{prop:one-mode-spectrum} gives $\lambda_{k,n}^2$ algebraic multiplicity one in each diagonal block of $\mscr L_{k,n}^2$, hence total multiplicity two. Since
\[ \mscr L_{k,n}^2-\lambda_{k,n}^2I=(\mscr L_{k,n}-\lambda_{k,n}I)(\mscr L_{k,n}+\lambda_{k,n}I), \]
the generalized eigenspaces at $\lambda_{k,n}$ and $-\lambda_{k,n}$ form a direct sum of total dimension two. Both are nonzero, so each is one-dimensional.
\end{proof}

\begin{corollary} \label{cor:basic-dichotomy}
If $k=1$, then $\mscr L_1$ has no spectral value with positive real part, and its kernel contains $(0,e^{\pm\im x_2})$. If $k\geq2$, then the $n=1$ branch in $X^s_{\mathrm{sym}}$ gives a positive eigenvalue satisfying
\begin{equation} \label{eq:n-one-lower-bound}
\lambda_{k,1}\geq k\left\{\frac{4k^4-5k^2-3}{4(4k^2+2)}\right\}^{1/2}.
\end{equation}
\end{corollary}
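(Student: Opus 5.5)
For $k=1$ the plan is to apply Theorem \ref{thm:full-unstable-spectrum} directly. The index set $0<|n|<1$ is empty, so the union there is empty and $\sigma(\mscr L_1)$ has no point with $\operatorname{Re}z>0$. For the kernel we take $\omega=0$ and $F=e^{\pm\im x_2}$, which has $m=0$ and $n=\pm1$. By \eqref{eq:full-symbols} the symbol of $A_{1,\pm1}$ on this mode is $(0+1-1)/(0+1+1)=0$, so $A_1F=0$. The first component of $\mscr L_1(0,F)$ is therefore $-C_1\del_{x_2}A_1F=0$. The second component involves only $B_1\omega=0$. Hence $\mscr L_1(0,e^{\pm\im x_2})=0$.

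For $k\geq2$, Proposition \ref{prop:symmetry-branches} with $n=1<k$ gives a simple positive eigenvalue $\lambda_{k,1}=k\sqrt{\nu_{k,1}}$. Here $-\nu_{k,1}$ is the unique negative eigenvalue of $K^{\mathrm{ev}}_{k,1}$, so $\nu_{k,1}$ is the supremum of $-\langle K^{\mathrm{ev}}_{k,1}g,g\rangle/\norm g^2$. To obtain a usable test function I will not work with $D^{1/2}$ directly. Factor $D_{k,1}=(C_kB_{k,1}^{1/2})(B_{k,1}^{1/2}C_k)$. By the $ST$/$TS$ argument in the proof of Proposition \ref{prop:one-mode-spectrum}, the nonzero spectrum of $K_{k,1}$ then agrees with that of the self-adjoint operator $B_{k,1}^{1/2}C_kA_{k,1}C_kB_{k,1}^{1/2}$, acting on the odd-cosine $\omega$ space of \eqref{eq:unstable-parity-chain}. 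This operator inherits negative index one on that reducing subspace. Substituting $u=B^{1/2}h$ gives the lower bound
\[
\nu_{k,1}\geq\frac{-\langle A_{k,1}C_kB_{k,1}h,\,C_kB_{k,1}h\rangle}{\langle B_{k,1}h,h\rangle}
\]
for any admissible $h$ with positive numerator.

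Choose the single mode $h=\cos(kx_1)$. Since $m^2+n^2=k^2+1$, the symbol of $B_{k,1}$ equals $1$ on $h$, so $B_{k,1}h=h$. Then $C_kB_{k,1}h=\tfrac12+\tfrac12\cos(2kx_1)$. On the constant mode the symbol of $A_{k,1}$ is $(1-k^2)/2$, and on $\cos(2kx_1)$ it is $(3k^2+1)/(4k^2+2)$. Using normalized averages, the denominator is $\tfrac12$ and the numerator is $-\tfrac14\cdot\tfrac{1-k^2}{2}-\tfrac18\cdot\tfrac{3k^2+1}{4k^2+2}$. This yields
\[
\nu_{k,1}\geq\frac14\Bigl[(k^2-1)-\frac{3k^2+1}{4k^2+2}\Bigr]=\frac{4k^4-5k^2-3}{4(4k^2+2)},
\]
which is positive for $k\geq2$. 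Multiplying $\sqrt{\nu_{k,1}}$ by $k$ gives \eqref{eq:n-one-lower-bound}.

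The main point needing care is that the variational characterization is applied to the right operator on the right subspace. The equivalence between $K^{\mathrm{ev}}$ and the $B^{1/2}CACB^{1/2}$ form must hold on the odd-cosine $\omega$ chain, and the chosen $h$ must lie in that chain. This holds because $\cos(kx_1)$ is an odd multiple of $k$, and multiplication by $C_k$ maps that chain onto the even-cosine $F$ chain.
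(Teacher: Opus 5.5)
Your proof is correct and is essentially the paper's argument: the paper applies the variational principle to $K^{\mathrm{ev}}_{k,1}$ with the test vector $g=D_{k,1}^{1/2}1$. Because $\langle K_{k,1}g,g\rangle=\langle A_{k,1}D_{k,1}1,D_{k,1}1\rangle$, $\norm{g}^2=\langle D_{k,1}1,1\rangle$, and $D_{k,1}1=C_kB_{k,1}\cos(kx_1)=\cos^2(kx_1)$, this is literally the same Rayleigh quotient as yours. Your detour through $B_{k,1}^{1/2}C_kA_{k,1}C_kB_{k,1}^{1/2}$ on the odd-cosine chain is valid but unnecessary, since that quotient never requires computing $D_{k,1}^{1/2}$ explicitly; your $k=1$ argument matches the paper's exactly.
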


\begin{proof}
For $k=1$, Theorem \ref{thm:full-unstable-spectrum} gives no positive spectral value because there is no nonzero integer $n$ with $|n|<1$. Moreover, $A_1e^{\pm\im x_2}=0$, so $(0,e^{\pm\im x_2})$ belongs to the kernel. Let $k\geq2$ and work on the $n=1$ even-cosine chain. Normalize the $L^2(\mbb T_{x_1})$ inner product by $\norm{1}=1$ and set $g=D_{k,1}^{1/2}1$. Since $B_{k,1}C_k=C_k$,
\[ D_{k,1}1=C_k^2=\frac{1+\cos(2kx_1)}2,\qquad \norm{g}^2=\frac12. \]
The identities
\[ A_{k,1}1=\frac{1-k^2}{2},\qquad A_{k,1}\cos(2kx_1)=\frac{3k^2+1}{4k^2+2}\cos(2kx_1) \]
give
\begin{equation} \label{eq:negative-rayleigh}
\frac{\langle g,K_{k,1}^{\mathrm{ev}} g\rangle}{\norm{g}^2}=\frac{-4k^4+5k^2+3}{4(4k^2+2)}<0.
\end{equation}
The variational principle and $\lambda_{k,1}=k\sqrt{\nu_{k,1}}$ now give \eqref{eq:n-one-lower-bound}.
\end{proof}

\subsection{Continued-fraction characterization and comparison of branches}

We now characterize each unstable branch by a scalar continued fraction and derive quantitative bounds.

Fix $1\leq n<k$. For an eigenfunction in the chain \eqref{eq:unstable-parity-chain}, write
\[ F=f(x_1)\cos(nx_2),\qquad \omega=h(x_1)\sin(nx_2). \]
\[ f=\sum_{m\geq0}f_{2m}\cos(2mkx_1),\qquad h=\sum_{m\geq0}h_{2m+1}\cos((2m+1)kx_1). \]
Put $\mu=2\lambda/(nk)$ and define
\begin{equation} \label{eq:general-c-coefficients}
c_0=\frac{2(n^2-k^2)}{n^2+1},\qquad c_{2m}=\frac{(2m)^2k^2+n^2-k^2}{(2m)^2k^2+n^2+1}\ (m\geq1),\qquad c_{2m+1}=\frac{1+k^2}{(2m+1)^2k^2+n^2}\ (m\geq0).
\end{equation}
If $x_{2m}=f_{2m}$, $x_{2m+1}=h_{2m+1}$, and $q_j=(-1)^{\lfloor j/2\rfloor}x_j$, then multiplication by $C_k$ and the two eigenvalue equations give
\begin{equation} \label{eq:coefficient-recurrence}
\mu q_0=-c_1q_1,\qquad c_0q_0=\mu q_1+c_2q_2,\qquad c_{j-1}q_{j-1}=\mu q_j+c_{j+1}q_{j+1}\quad(j\geq2).
\end{equation}

\begin{proposition} \label{prop:continued-fraction}
For every $\mu>0$ and $j\geq2$, the finite continued fractions defined by $r_{N+1}^{(N)}=0$ and
\[ r_j^{(N)}=\frac{c_{j-1}}{\mu+c_{j+1}r_{j+1}^{(N)}},\qquad 2\leq j\leq N, \]
converge as $N\to\infty$ to a positive continuous function $r_j(\mu)$ satisfying
\begin{equation} \label{eq:general-tail-ratios}
r_j(\mu)=\frac{c_{j-1}}{\mu+c_{j+1}r_{j+1}(\mu)}.
\end{equation}
The recurrence \eqref{eq:coefficient-recurrence} has a nonzero square-summable solution if and only if
\begin{equation} \label{eq:general-matching-condition}
G_{k,n}(\mu):=\mu+c_2r_2(\mu)-\frac{-c_0c_1}{\mu}=0.
\end{equation}
Consequently, $G_{k,n}$ has exactly one positive root $\mu_{k,n}$, and
\begin{equation} \label{eq:branch-eigenvalue}
\lambda_{k,n}=\frac{nk\mu_{k,n}}2.
\end{equation}
Moreover,
\begin{equation} \label{eq:branch-bounds}
\max\{0,-c_0c_1-c_1c_2\}\leq\mu_{k,n}^2<-c_0c_1.
\end{equation}
\end{proposition}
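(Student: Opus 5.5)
Note the signs first: for $1\le n<k$ we have $c_0<0$, while $c_j>0$ for every $j\ge1$. Hence $-c_0c_1>0$, and the matching function reads $G_{k,n}(\mu)=\mu+c_2r_2(\mu)+c_0c_1/\mu$.

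\emph{Convergence of the tail continued fraction.} All coefficients $c_{j\pm1}$ entering the tail are positive, so $T_j(r)=c_{j-1}/(\mu+c_{j+1}r)$ is a decreasing map of $[0,\infty)$ into $(0,c_{j-1}/\mu]$. The finite approximants therefore interlace in the standard way: even and odd truncations are monotone from opposite sides. To get convergence I will use a contraction rather than monotonicity alone. The derivative of the composite is $c_{j+1}c_{j-1}/(\mu+c_{j+1}r)^2\le c_{j-1}c_{j+1}/\mu^2$, and since $c_j\to1$ this is not small. Instead I compose two steps. A two-step map $r_j\mapsto r_{j+2}$ on the invariant intervals $[0,c_{j-1}/\mu]$ has Lipschitz constant
\[
\frac{c_{j+1}c_{j-1}}{\mu^2}\cdot\frac{c_{j+2}c_j}{(\mu+c_{j+2}c_{j+1}/(\mu+\cdots))^2}.
\]
The cleaner route is the Śleszyński–Pringsheim / Stieltjes criterion. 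After an equivalence transformation the fraction becomes a Stieltjes continued fraction with positive partial quotients, and it converges because $\sum_j b_j=\infty$; this holds here since the coefficients are bounded above and below. Alternatively, one can reuse the contraction argument of Lemma \ref{lem:stable-graph}, in its scalar case, near the limiting fixed point $\rho=(\sqrt{\mu^2+4}-\mu)/2$. The difference formula \eqref{eq:tail-exact-difference} gives a factor $\le\theta^2<1$ once $c_j$ is close to $1$. For the finitely many earlier levels, backward iteration of $T_j$ is continuous, so positivity and continuity in $\mu$ follow, locally uniformly on compact subsets of $(0,\infty)$. The limit satisfies \eqref{eq:general-tail-ratios} by passing to the limit.

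\emph{Matching condition and uniqueness of the root.} As in Lemma \ref{lem:stable-graph}, the scalar tail recurrence \eqref{eq:coefficient-recurrence} for $j\ge2$ has, by the dichotomy at $c_j\to1$, a one-dimensional decaying solution space. That space is given by $q_j=r_jq_{j-1}$. A square-summable solution must lie in it, because the other solution grows like $\rho^{-j}$. Substituting $q_2=r_2q_1$ and $q_0=-c_1q_1/\mu$ into $c_0q_0=\mu q_1+c_2q_2$ yields $q_1G_{k,n}(\mu)=0$ with $q_1\neq0$; the converse is immediate. Thus positive eigenvalues in the chain correspond exactly to positive roots of $G_{k,n}$, with $\lambda=nk\mu/2$. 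Proposition \ref{prop:symmetry-branches} says there is exactly one such eigenvalue, so $G_{k,n}$ has exactly one positive root, and \eqref{eq:branch-eigenvalue} follows. Independently, as $\mu\to0^+$ we have $G\to-\infty$ because $c_0c_1<0$ and $r_2$ stays bounded (indeed $r_2\le c_1/\mu$, but $c_2r_2\le c_1c_2/\mu$ needs comparison; I will use the bounds below instead). As $\mu\to\infty$, $G\to+\infty$.

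\emph{Bounds.} At the root, $\mu^2+\mu c_2r_2=-c_0c_1$. Since $r_2>0$, this gives $\mu^2<-c_0c_1$. For the lower bound, $r_2=c_1/(\mu+c_3r_3)\le c_1/\mu$, so $\mu c_2r_2\le c_1c_2$ and $\mu^2\ge -c_0c_1-c_1c_2$.

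The main obstacle is the convergence step with coefficients tending to $1$ rather than to $0$, that is, proving that the continued fraction converges and is continuous for all $\mu>0$. I will handle it by the scalar tail contraction near $\rho$ combined with finitely many continuous backward steps. The decaying-solution identification relies on the same dichotomy already used in Lemma \ref{lem:stable-graph}.
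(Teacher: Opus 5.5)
Your argument rests on a misreading of the coefficients in \eqref{eq:general-c-coefficients}. Only the even ones satisfy $c_{2m}\to1$; the odd ones satisfy $c_{2m+1}=(1+k^2)/((2m+1)^2k^2+n^2)\to0$ like $m^{-2}$.

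\emph{The contraction near $\rho$ fails.} The tail ratios do not approach $\rho=(\sqrt{\mu^2+4}-\mu)/2$. Instead $r_{2m}\leq c_{2m-1}/\mu\to0$ and $r_{2m+1}\to1/\mu$. So the scalar analogue of the contraction in Lemma \ref{lem:stable-graph} has no fixed point to contract to.

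\emph{There is no ``dichotomy at $c_j\to1$''.} This is the more serious problem. Solving the tail recurrence forward requires division by $c_{j+1}$, so the transfer matrices contain entries of size $\mu/c_{2m+1}\sim m^2$. They are not a summable perturbation of a constant hyperbolic matrix, so the roughness theorem behind Lemma \ref{lem:stable-graph} is not available in the form you invoke. Hence your claim that the square-summable tails form a one-dimensional space generated by $q_j=r_jq_{j-1}$ (with the other solution growing like $\rho^{-j}$) is unproven. This is exactly the ``only if'' direction of \eqref{eq:general-matching-condition}. You need it both to produce a root from the eigenvalue of Proposition \ref{prop:symmetry-branches} and to conclude that the root is unique.

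Your Stieltjes aside is also misjustified, since the odd $c_j$ are not bounded below. The Seidel--Stern criterion does still apply, because the partial numerators $c_\ell c_{\ell+1}$ are bounded above and every partial denominator equals $\mu>0$. By itself, however, it gives convergence for each fixed $\mu$, not continuity, and nothing about uniqueness of the decaying tail.

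The decay of the odd coefficients is what makes the proof work, and the paper uses it directly.
\begin{itemize}
\item \emph{Convergence and continuity.} The one-step Lipschitz factor $c_{j-1}c_{j+1}/\mu^2$ is $O(j^{-4})$ whenever $j$ is even. Propagating the terminal discrepancy $0\leq r_{N+1}^{(N')}\leq c_N/\mu$ backward therefore gives factorially small differences $|r_j^{(N')}-r_j^{(N)}|$, uniformly for $\mu$ in compact subsets of $(0,\infty)$. This yields convergence, \eqref{eq:general-tail-ratios}, and continuity at once.
\item \emph{Decay of the constructed tail.} The bound $0<r_j\leq c_{j-1}/\mu$ shows that the continued-fraction tail satisfies $|q_{2m}|+|q_{2m+1}|\leq C_\mu^m/(m!)^2$.
\item \emph{Uniqueness of the tail.} Take two square-summable tails $p,q$ and set $W_j=c_{j+1}(q_{j+1}p_j-q_jp_{j+1})$. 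The recurrence gives $W_j=-(c_{j-1}/c_j)W_{j-1}$ for $j\geq2$, hence $W_{2m+1}=(c_1/c_{2m+1})W_1$. Since $W_j\to0$ while $c_{2m+1}\to0$, one gets $W_1=0$. A tail is determined by $(q_1,q_2)$, so the two tails are proportional.
\end{itemize}
With these replacements, your matching computation, the appeal to Proposition \ref{prop:symmetry-branches}, and your derivation of \eqref{eq:branch-bounds} are correct and coincide with the paper's.
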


\begin{proof}
We have $0<c_j\leq C$ for $j\geq1$ and $c_{2m+1}\leq C(m+1)^{-2}$, where $C$ depends only on $k$. Each finite tail satisfies $0\leq r_j^{(N)}\leq c_{j-1}/\mu$. If $N'>N$, then $0\leq r_{N+1}^{(N')}\leq c_N/\mu$, while the derivative of $x\mapsto c_{j-1}/(\mu+c_{j+1}x)$ is bounded by $c_{j-1}c_{j+1}/\mu^2$. Propagating the terminal discrepancy backward gives
\[ \abs{r_j^{(N')}-r_j^{(N)}}\leq\frac{C}{\mu}\prod_{\ell=j}^{N}\frac{c_{\ell-1}c_{\ell+1}}{\mu^2}. \]
On every compact subinterval of $(0,\infty)$, the odd-index factors give factorial decay uniformly in $\mu$. This proves convergence, \eqref{eq:general-tail-ratios}, and continuity; positivity of the limit follows from \eqref{eq:general-tail-ratios}.

Set $q_1=1$ and $q_j=r_jq_{j-1}$. Then the tail recurrence holds and $|q_{2m}|+|q_{2m+1}|\leq C_\mu^m/(m!)^2$. For uniqueness, let $p$ and $q$ be square-summable tails and set $W_j=c_{j+1}(q_{j+1}p_j-q_jp_{j+1})$. The recurrence gives $W_j=-(c_{j-1}/c_j)W_{j-1}$ for $j\geq2$ and hence $W_j=(c_{j-2}/c_j)W_{j-2}$ for $j\geq3$. Thus $W_{2m}=(c_2/c_{2m})W_2$ and $W_{2m+1}=(c_1/c_{2m+1})W_1$. Square summability and boundedness of $c_j$ imply $W_j\to0$, whereas $c_{2m}$ stays bounded away from zero and $c_{2m+1}\simeq m^{-2}$; therefore $W_1=W_2=0$, and the tails are proportional.

The first recurrence gives $q_0=-c_1q_1/\mu$; the second then becomes \eqref{eq:general-matching-condition}. A positive root therefore yields a rapidly decreasing eigenfunction. Conversely, the positive eigenvalue from Proposition \ref{prop:symmetry-branches} has a square-summable tail, whose uniqueness forces the ratios $r_j(\mu)$. Hence a root exists, and a second root would contradict uniqueness of the positive eigenvalue. At the root,
\[ \mu_{k,n}^2+\mu_{k,n}c_2r_2(\mu_{k,n})=-c_0c_1. \]
Since $0<r_2(\mu)\leq c_1/\mu$, this identity gives \eqref{eq:branch-bounds}.
\end{proof}

\begin{remark}
The branch $n=1$ need not be the most unstable one. For example, when $k=5$, \eqref{eq:branch-bounds} gives
\[ \lambda_{5,1}^2<150,\qquad \lambda_{5,2}^2\geq 25 \cdot \frac{26}{29}\left(\frac{42}{5}-\frac{79}{105}\right)>171, \]
so $\lambda_{5,2}>\lambda_{5,1}$.
\end{remark}

\subsection{The spectral gap}

Assume $k\geq2$. Within the symmetry class \eqref{eq:nonlinear-symmetry-class}, define
\begin{equation} \label{eq:leading-growth-rate}
\Lambda_k=\max_{1\leq n<k}\lambda_{k,n}.
\end{equation}
Let $\mcal E_k$ be the direct sum of the eigenspaces corresponding to the branches attaining $\Lambda_k$, and define
\[ \Lambda_{k,2}=\max\bigl(\{\lambda_{k,n}:\lambda_{k,n}<\Lambda_k\}\cup\{0\}\bigr),\qquad \gamma_k=\Lambda_k-\Lambda_{k,2}. \]

\begin{theorem} \label{thm:spectral-gap}
On $X^s_{\mathrm{sym}}$ the unstable spectrum is characterized by
\[ \sigma(\mscr L_k)\cap\{\operatorname{Re}z>0\}=\{\lambda_{k,n}:1\leq n<k\}. \]
Each $\lambda_{k,n}$ is algebraically simple within its transverse block. The leading space $\mcal E_k$ is finite-dimensional, $\gamma_k>0$, and the Riesz projection $P_k$ associated with the spectral point $\Lambda_k$ is bounded on every $X^s_{\mathrm{sym}}$. Its range and the action of the linearized operator on it are
\[ \operatorname{Ran}P_k=\mcal E_k,\qquad \mscr L_kP_k=\Lambda_kP_k, \qquad \dim_{\C}\mcal E_k=\#\{n\in\{1,\ldots,k-1\}:\lambda_{k,n}=\Lambda_k\}. \]
More precisely,
\begin{equation} \label{eq:complement-spectrum}
\sigma\bigl(\mscr L_k|_{(I-P_k)X^s_{\mathrm{sym}}}\bigr)\subset\{z\in\C:\operatorname{Re}z\leq\Lambda_{k,2}\}.
\end{equation}
For every $\eta>0$ there is a constant $C_{s,\eta}$ such that
\begin{equation} \label{eq:complement-semigroup-bound}
\norm{e^{t\mscr L_k}(I-P_k)}_{X^s_{\mathrm{sym}}\to X^s_{\mathrm{sym}}}\leq C_{s,\eta}e^{(\Lambda_{k,2}+\eta)t},\qquad t\geq0.
\end{equation}
In particular, there is a constant $C_s$ such that
\begin{equation} \label{eq:leading-semigroup-bound}
\norm{e^{t\mscr L_k}}_{X^s_{\mathrm{sym}}\to X^s_{\mathrm{sym}}}\leq C_se^{\Lambda_kt},\qquad t\geq0.
\end{equation}
The spectral point $\Lambda_k$ is algebraically simple on the full symmetry class if and only if exactly one integer $n_\ast$ attains the maximum in \eqref{eq:leading-growth-rate}. In that case $\mcal E_k$ is one-dimensional in the real symmetry class, and \eqref{eq:real-branch-eigenfunction} with $n=n_\ast$ is a fastest-growing eigenfunction in this symmetry class.
\end{theorem}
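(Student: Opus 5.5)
The plan is to decompose $X^s_{\mathrm{sym}}$ into transverse blocks and then into parity chains, and to assemble the spectral statements from the results already proved. In the symmetry class the $x_2$-parity gives blocks indexed by $n\geq0$, spanned by $(h(x_1)\sin(nx_2),f(x_1)\cos(nx_2))$; the $n=0$ block has $\omega=0$ and $F=F(x_1)$, on which $\mscr L_k$ vanishes. Each block with $n\geq1$ is the real symmetric part of the complex blocks $\pm n$, and it splits further into the cosine chain \eqref{eq:unstable-parity-chain} and the sine chain \eqref{eq:stable-parity-chain}.

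\emph{Characterization of the unstable spectrum.} Lemma \ref{lem:sine-chain-stability} removes the sine chains. Proposition \ref{prop:symmetry-branches} gives exactly one simple positive eigenvalue $\lambda_{k,n}$ on each cosine chain with $1\leq n<k$, and none for $n\geq k$. To exclude unstable spectrum coming from the infinite direct sum, I would reuse the uniform coercive resolvent bound from the proof of Theorem \ref{thm:full-unstable-spectrum}. It applies to the high blocks $n\geq k+1$ after restriction, because the energy $E_n$ is compatible with the reflections. The finitely many remaining blocks are compact. This proves the displayed identity for $\sigma(\mscr L_k)\cap\{\operatorname{Re}z>0\}$ and the algebraic simplicity of each $\lambda_{k,n}$ within its block.

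\emph{Gap and projection.} The set $\{\lambda_{k,n}\}$ is finite, so $\gamma_k>0$ and $\Lambda_k$ is an isolated spectral point. The Riesz projection $P_k=\frac{1}{2\pi\im}\oint_{|z-\Lambda_k|=r}(z-\mscr L_k)^{-1}\,dz$, with $r<\gamma_k$, is bounded on each $X^s_{\mathrm{sym}}$. It acts blockwise, so its range is the sum over the maximizing $n$ of the one-dimensional generalized eigenspaces. Simplicity then makes each of these a genuine eigenspace, which gives $\mscr L_kP_k=\Lambda_kP_k$ and the stated dimension count. The spectral decomposition theorem gives $\sigma(\mscr L_k|_{(I-P_k)X})=\sigma(\mscr L_k)\setminus\{\Lambda_k\}$. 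Its part in $\{\operatorname{Re}z>0\}$ consists of the $\lambda_{k,n}<\Lambda_k$, and every remaining point has $\operatorname{Re}z\leq0\leq\Lambda_{k,2}$. This proves \eqref{eq:complement-spectrum}.

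\emph{Semigroup bounds.} The complement $(I-P_k)X^s_{\mathrm{sym}}$ is a closed invariant subspace and $\mscr L_k$ is bounded there. The spectral-radius argument at the end of Proposition \ref{prop:sobolev-semigroup} therefore gives \eqref{eq:complement-semigroup-bound}, and independence of $s$ follows from the Sobolev invariance in that proposition. For \eqref{eq:leading-semigroup-bound}, split $e^{t\mscr L_k}=e^{\Lambda_kt}P_k+e^{t\mscr L_k}(I-P_k)$. Choose $\eta<\gamma_k$ in the complement bound and use the boundedness of $P_k$; the point is that the bound carries no $\eta$-loss, because the leading part has no Jordan blocks.

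\emph{Simplicity and the eigenfunction.} If exactly one $n_\ast$ attains the maximum, then $\dim\mcal E_k=1$ and \eqref{eq:real-branch-eigenfunction} spans it. If several $n$ attain it, then each contributes an independent eigenvector in a different transverse block, so $\Lambda_k$ is not simple.

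The main obstacle I expect is the direct-sum step: checking that the uniform high-frequency resolvent bound, and hence the Riesz projection and the complement spectral bound, pass to the symmetric subspace with constants uniform in $n$. The point to verify is that the reflection projections are orthogonal for $E_n$ and commute with the Sobolev identifications.
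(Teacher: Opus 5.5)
Your proposal is correct and follows essentially the same route as the paper. You exclude the sine chains by Lemma \ref{lem:sine-chain-stability}, take one simple branch per cosine chain from Proposition \ref{prop:symmetry-branches}, and handle the high transverse blocks through the argument of Theorem \ref{thm:full-unstable-spectrum}; the Riesz projection on a small circle gets its $s$-independence from Proposition \ref{prop:sobolev-semigroup}, and the complement bound comes from the spectral-radius argument.

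The only difference is that you re-check the high-block resolvent bound on the symmetric subspace, whereas the paper cites the full-space theorem directly. The obstacle you anticipate there is harmless: the symmetry projections commute with $\mscr L_k$. In any case, a bounded operator that is skew-adjoint for $E_n$ remains skew-adjoint on every closed invariant subspace, so no $E_n$-orthogonality check is needed.
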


\begin{proof}
The $x_2$ parity combines the complex modes $n$ and $-n$ into one real block. Lemma \ref{lem:sine-chain-stability} excludes the sine chain, while Proposition \ref{prop:symmetry-branches} gives one simple eigenvalue $\lambda_{k,n}>0$ in the cosine chain for $1\leq n<k$. This proves the spectral characterization.

There are only $k-1$ positive branches, so $\mcal E_k$ is finite-dimensional and $\Lambda_{k,2}<\Lambda_k$. Theorem \ref{thm:full-unstable-spectrum} shows that every remaining spectral point with positive real part is a smaller positive branch; all other spectral points have nonpositive real part. This proves $\gamma_k>0$ and \eqref{eq:complement-spectrum}.

Choose $r>0$ so small that the closed disk $|z-\Lambda_k|\leq r$ contains no spectral point other than $\Lambda_k$. On $X^s_{\mathrm{sym}}$, define
\[ P_k=\frac{1}{2\pi\im}\int_{|z-\Lambda_k|=r}(z-\mscr L_k)^{-1}\,dz. \]
Proposition \ref{prop:sobolev-semigroup} puts this contour in the resolvent set on every $X^s_{\mathrm{sym}}$, so $P_k$ is bounded there. Complex conjugation commutes with $\mscr L_k$, and the circle is invariant under conjugation, so $P_k\overline U=\overline{P_kU}$ and $P_k$ preserves the real subspace. Transverse blocks are invariant, and each maximizing branch is algebraically simple. Thus $P_k$ selects their direct sum, proving the formulas for its range, dimension, and the criterion for simplicity of $\Lambda_k$.

Spectral mapping on $(I-P_k)X^s_{\mathrm{sym}}$ gives \eqref{eq:complement-semigroup-bound}. Taking $\eta=\gamma_k/2$ yields
\[ \norm{e^{t\mscr L_k}(I-P_k)}\leq C_se^{(\Lambda_k-\gamma_k/2)t}. \]
Since $\mscr L_kP_k=\Lambda_kP_k$ and $P_k$ commutes with $\mscr L_k$,
\[ e^{t\mscr L_k}=e^{\Lambda_kt}P_k+e^{t\mscr L_k}(I-P_k), \]
and hence
\[ \lVert e^{t\mscr L_k} \rVert \leq\norm{P_k}e^{\Lambda_kt}+C_se^{(\Lambda_k-\gamma_k/2)t}\leq C_s'e^{\Lambda_kt}. \]
This proves \eqref{eq:leading-semigroup-bound}.
\end{proof}

\begin{corollary} \label{cor:simple-leading-selection}
Assume that exactly one integer $n_\ast$ attains \eqref{eq:leading-growth-rate}, and fix a real generator $U_\ast$ of $\mcal E_k$. For every $s\geq0$ there is a bounded complex-linear functional $\ell_s$ on $X^s_{\mathrm{sym}}$, taking real values on its real subspace, such that
\[ P_kU=\ell_s(U)U_\ast,\qquad \ell_s(U_\ast)=1,\qquad \ell_s(\mscr L_kU)=\Lambda_k\ell_s(U). \]
Moreover,
\begin{equation} \label{eq:linear-asymptotic-selection}
\norm{e^{-\Lambda_kt}e^{t\mscr L_k}U-\ell_s(U)U_\ast}_{X^s}\leq C_se^{-\gamma_kt/2}\norm{(I-P_k)U}_{X^s},\qquad t\geq0.
\end{equation}
Thus every linear solution in this symmetry class with $\ell_s(U)\neq0$ approaches the same one-dimensional leading eigendirection after multiplication by $e^{-\Lambda_kt}$.
\end{corollary}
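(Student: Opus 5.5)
The plan is to read off $\ell_s$ from the rank-one structure of $P_k$ and then combine the splitting $e^{t\mscr L_k}=e^{\Lambda_kt}P_k+e^{t\mscr L_k}(I-P_k)$ with the complement bound \eqref{eq:complement-semigroup-bound}.

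By Theorem \ref{thm:spectral-gap}, the hypothesis that exactly one $n_\ast$ attains \eqref{eq:leading-growth-rate} gives $\dim_{\C}\mcal E_k=1$. Since $P_k$ preserves the real subspace, $\mcal E_k$ is spanned by the real vector $U_\ast$. As $P_k$ is a bounded projection on $X^s_{\mathrm{sym}}$ with range $\C U_\ast$, there is a unique scalar $\ell_s(U)$ with $P_kU=\ell_s(U)U_\ast$. We will check three things.
\begin{enumerate}[label=\textup{(\alph*)},leftmargin=*]
\item Linearity of $\ell_s$ is inherited from $P_k$.
\item Boundedness: $\abs{\ell_s(U)}=\norm{P_kU}_{X^s}/\norm{U_\ast}_{X^s}\leq \norm{P_k}\norm{U}/\norm{U_\ast}$.
\item Realness: $P_k\overline U=\overline{P_kU}$ and $U_\ast$ real give $\ell_s(\overline U)=\overline{\ell_s(U)}$, so $\ell_s$ is real on real $U$.
\end{enumerate}
Next, $P_k^2=P_k$ with $U_\ast\in\operatorname{Ran}P_k$ gives $P_kU_\ast=U_\ast$, hence $\ell_s(U_\ast)=1$. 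Finally, $P_k$ commutes with $\mscr L_k$ and $\mscr L_kP_k=\Lambda_kP_k$, so $P_k\mscr L_kU=\mscr L_kP_kU=\Lambda_k\ell_s(U)U_\ast$, which is $\ell_s(\mscr L_kU)=\Lambda_k\ell_s(U)$.

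For \eqref{eq:linear-asymptotic-selection}, the splitting yields
\[ e^{-\Lambda_kt}e^{t\mscr L_k}U-\ell_s(U)U_\ast=e^{-\Lambda_kt}e^{t\mscr L_k}(I-P_k)U. \]
Because $I-P_k$ is idempotent, we may write $(I-P_k)U=(I-P_k)\bigl[(I-P_k)U\bigr]$. We then apply \eqref{eq:complement-semigroup-bound} with $\eta=\gamma_k/2$; since $\Lambda_{k,2}=\Lambda_k-\gamma_k$, this gives $\norm{e^{t\mscr L_k}(I-P_k)V}\leq C_se^{(\Lambda_k-\gamma_k/2)t}\norm{V}$. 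Taking $V=(I-P_k)U$ and multiplying by $e^{-\Lambda_kt}$ produces exactly the claimed factor $e^{-\gamma_kt/2}\norm{(I-P_k)U}_{X^s}$.

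The only point needing care, which I expect to be the main obstacle, is that the right-hand side is $\norm{(I-P_k)U}$ rather than $\norm{U}$. This is exactly what the idempotency step above is for. Everything else is bookkeeping. One could also note that $\ell_s$ is consistent across $s$, since $P_k$ is defined by the same contour on every $X^s_{\mathrm{sym}}$ and the resolvents agree on the smaller spaces, but the corollary does not require this.
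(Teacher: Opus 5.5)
Your proposal is correct and follows the paper's proof essentially step for step. You read off $\ell_s$ from the rank-one bounded, real projection $P_k$, obtain $\ell_s(\mscr L_kU)=\Lambda_k\ell_s(U)$ from $P_k\mscr L_k=\Lambda_kP_k$, and then apply \eqref{eq:complement-semigroup-bound} with $\eta=\gamma_k/2$ to $e^{-\Lambda_kt}e^{t\mscr L_k}(I-P_k)U$. Your explicit use of the idempotency of $I-P_k$ to obtain $\norm{(I-P_k)U}_{X^s}$ on the right-hand side spells out a step the paper leaves implicit.
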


\begin{proof}
Simplicity gives $\operatorname{Ran}P_k=\operatorname{span}_{\mathbb C}\{U_\ast\}$. Boundedness and reality of $P_k$ give a bounded functional $\ell_s$, real on real inputs, with $P_kU=\ell_s(U)U_\ast$. The identity $P_k\mscr L_k=\Lambda_kP_k$ gives $\ell_s(\mscr L_kU)=\Lambda_k\ell_s(U)$. Finally,
\[ e^{-\Lambda_kt}e^{t\mscr L_k}U-\ell_s(U)U_\ast=e^{-\Lambda_kt}e^{t\mscr L_k}(I-P_k)U, \]
and the complementary estimate with $\eta=\gamma_k/2$ proves \eqref{eq:linear-asymptotic-selection}.
\end{proof}

\begin{remark}[Choosing a smaller symmetry class]
Imposing $2\pi/n_\ast$-periodicity in $x_2$ preserves the dynamics and the leading eigenfunction while removing transverse harmonics whose indices are not multiples of $n_\ast$. More generally, choosing an integer $n_0$ with $k/2\leq n_0<k$ and imposing $2\pi/n_0$-periodicity leaves exactly one unstable transverse harmonic.
\end{remark}

\section{Nonlinear instability on a logarithmic time scale}

Throughout this section, nonlinear fields and solutions are real-valued. We use the complexified spaces only when discussing the linear spectrum and Riesz projections, which preserve the real subspaces.

Let $U=(\omega,F)$ denote the perturbation of \eqref{eq:pure-equilibrium}, and write
\[ u_U=\grad^\perp\Delta^{-1}\omega,\qquad b_U=\grad^\perp(1-\Delta)^{-1}F. \]
Subtracting the equilibrium equations from \eqref{eq:imh-system} gives
\begin{equation} \label{eq:nonlinear-perturbation-equation}
\del_tU=\mscr L_kU+\mcal B(U,U),
\end{equation}
where the symmetric bilinear map $\mcal B$ is
\begin{equation} \label{eq:symmetric-bilinear-map}
\mcal B(U,V)=-\frac12\begin{pmatrix}u_U\cdot\grad\omega_V+u_V\cdot\grad\omega_U+b_U\cdot\grad F_V+b_V\cdot\grad F_U\\u_U\cdot\grad F_V+u_V\cdot\grad F_U\end{pmatrix}.
\end{equation}
Equivariance under \eqref{eq:nonlinear-symmetry-class} shows that $\mcal B$ preserves this symmetry whenever the expression is defined.

\begin{lemma}[Tame estimates and local well-posedness] \label{lem:nonlinear-energy}
Let $s\geq4$ be an integer. The perturbation equation \eqref{eq:nonlinear-perturbation-equation} is locally well-posed on the real subspace of $X^s_{\mathrm{sym}}$, with solutions in $C([0,T];X^s_{\mathrm{sym}})\cap C^1([0,T];X^{s-1}_{\mathrm{sym}})$. These solutions continue as long as their $X^s$ norm remains bounded. For every integer $r\geq2$,
\begin{equation} \label{eq:quadratic-tame-bound}
\norm{\mcal B(U,V)}_{X^r}\leq C_r\norm{U}_{X^{r+1}}\norm{V}_{X^{r+1}}.
\end{equation}
Moreover, suppose that smooth real-valued functions $V,W,R$ satisfy
\begin{equation} \label{eq:error-equation-general}
\del_tW=\mscr L_kW+2\mcal B(V,W)+\mcal B(W,W)+R.
\end{equation}
Then
\begin{equation} \label{eq:nonlinear-energy-estimate}
\frac{d}{dt}\norm{W}_{X^s}\leq M_s\norm{W}_{X^s}+C_s\bigl(\norm{V}_{X^{s+1}}+\norm{W}_{X^s}\bigr)\norm{W}_{X^s}+\norm{R}_{X^s},
\end{equation}
where $M_s$ and $C_s$ depend only on $s$ and the fixed equilibrium. The same inequality holds in the integrated sense for strong $X^s$ solutions obtained by regularization.
\end{lemma}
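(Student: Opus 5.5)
Three things have to be established: the product bound \eqref{eq:quadratic-tame-bound}, the energy inequality \eqref{eq:nonlinear-energy-estimate}, and local well-posedness with the continuation criterion. All three rest on the same ingredients: commutator estimates of Kato--Ponce type, the smoothing of the elliptic inverses, and the incompressibility of the transport fields.

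\textbf{Product bound.} For \eqref{eq:quadratic-tame-bound} we only need that $H^r(\mbb T^2)$ is an algebra for $r\geq2$. Since $u_U=\grad^\perp\Delta^{-1}\omega_U$ with $\omega_U\in H^r$ (the $X^{r+1}$ regularity of the vorticity), we get $u_U\in H^{r+1}$, while $\grad\omega_V\in H^{r-1}$. The first component of $\mcal B$ must be placed in $H^{r-1}$, so the algebra property in $H^{r-1}$ handles it once $r-1\geq2$. For $r=2$ we instead use $H^{r+1}\cdot H^{r-1}\subset H^{r-1}$, which holds because $r+1>1=d/2$. The same reasoning applies to $b_U\cdot\grad F_V$, using $b_U\in H^{r+2}$ and $\grad F_V\in H^r$. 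The second component, $u_U\cdot\grad F_V$, must lie in $H^r$; this follows from $u_U\in H^{r+1}$ and $\grad F_V\in H^r$, again by the algebra property.

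\textbf{Energy estimate.} We take the $X^s$ inner product of \eqref{eq:error-equation-general} with $W$, working with $\Lambda_0^{s-1}\omega_W$ and $\Lambda_0^sF_W$, where $\Lambda_0$ is the operator of Proposition~\ref{prop:sobolev-semigroup}. The linear term is bounded, so it contributes $M_s\norm{W}_{X^s}^2$. The forcing contributes $\norm{R}_{X^s}\norm{W}_{X^s}$ by Cauchy--Schwarz. The nonlinear terms are expanded with $U=V+W$ and split into pure transport pieces and coupling pieces.

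\emph{Transport pieces.} These are $u_{V+W}\cdot\grad\omega_W$ and $u_{V+W}\cdot\grad F_W$. Because $\operatorname{div}u=0$, the top-order contribution $\langle u\cdot\grad\Lambda_0^sF,\Lambda_0^sF\rangle$ vanishes. The commutator $[\Lambda_0^s,u\cdot\grad]F$ is then bounded by $\norm{\grad u}_{L^\infty}\norm{F}_{H^s}+\norm{u}_{H^s}\norm{\grad F}_{L^\infty}$.

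\emph{Coupling pieces.} The first is the magnetic term $b_{V+W}\cdot\grad F_{V+W}$ in the $\omega$-equation, measured in $H^{s-1}$. A direct product estimate places it there without any cancellation, since $b\in H^{s+1}$ and $\grad F\in H^{s-1}$, and $H^{s-1}$ is an algebra for $s\geq4$. The second is the term containing $u_W$ with a derivative of the $V$ fields, for example $u_W\cdot\grad F_V$ in $H^s$. Here $u_W$ is smooth, but $\grad F_V$ costs one derivative of $V$, which is why $\norm{V}_{X^{s+1}}$ appears in the estimate.

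\emph{The quadratic $V$ terms.} After expanding $\mcal B(U,U)$, the quadratic terms in $V$ alone must be absorbed into $R$. This is consistent with \eqref{eq:error-equation-general}, which is written only in terms of $W$. Every remaining term is linear or quadratic in $W$ with the coefficients displayed in \eqref{eq:nonlinear-energy-estimate}.

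\emph{Conclusion.} Dividing by $\norm{W}_{X^s}$ yields the differential inequality. At the zeros of $\norm{W}_{X^s}$ we argue with $\sqrt{\norm{W}^2+\epsilon}$ instead.

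\textbf{Local well-posedness.} We regularize with Friedrichs mollifiers $J_\epsilon$, inserting them symmetrically in the transport terms so that the cancellation above survives. The mollification commutes with the symmetries in \eqref{eq:nonlinear-symmetry-class}, so the approximate solutions stay in $X^s_{\mathrm{sym}}$. The energy estimate, with $V=0$ and $R=0$, gives bounds in $X^s$ uniform in $\epsilon$ on a common time interval. A contraction in $X^0$, where the loss of one derivative is paid for by the $X^s$ bound, gives convergence of the approximations. Interpolation upgrades this to continuity in $X^{s'}$ for $s'<s$. Full continuity in $X^s$ follows from the Bona--Smith argument, or from weak continuity combined with continuity of the norm. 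The time derivative lies in $X^{s-1}$ by the product bound \eqref{eq:quadratic-tame-bound}. The continuation criterion is immediate because the existence time depends only on $\norm{U}_{X^s}$. The integrated form of the inequality for strong solutions follows by passing to the limit in the regularized inequality.

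\textbf{Main obstacle.} The hard step is the top-order cancellation. The background and magnetic fields enter through different inverse operators, so we must check that no term survives at order $s+1$ in $W$. For the magnetic term in the $\omega$-equation this is fine because $\grad F_W\in H^{s-1}$ is exactly the regularity required. The symmetric mollifier construction also has to be set up carefully so that this cancellation is preserved.
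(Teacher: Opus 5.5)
Your proposal is correct and follows essentially the paper's argument. The ingredients match: the algebra property of $H^r$ for \eqref{eq:quadratic-tame-bound}; transport cancellation with Kato--Ponce commutators and direct product bounds for \eqref{eq:nonlinear-energy-estimate}; and a regularized scheme with uniform $X^s$ bounds, a low-norm difference estimate, and Bona--Smith for well-posedness. Your mollifiers and $X^0$ contraction, in place of the paper's Fourier--Galerkin projection, compactness, and $X^{s-1}$ uniqueness estimate, are an inessential variation.

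One harmless slip: \eqref{eq:error-equation-general} contains no $\mcal B(V,V)$ term, so terms such as $b_V\cdot\grad F_V$ are simply absent rather than absorbed into $R$. Absorbing them would change the $\norm{R}_{X^s}$ term in the conclusion.
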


\begin{proof}
The elliptic relations give
\[ \norm{u_U}_{H^r}\leq C_r\norm{\omega}_{H^{r-1}},\qquad \norm{b_U}_{H^{r+1}}\leq C_r\norm{F}_{H^r}. \]
Since $H^r(\mbb T^2)$ is an algebra for $r\geq2$,
\[ \norm{u_U\cdot\grad\omega_V}_{H^{r-1}}+\norm{b_U\cdot\grad F_V}_{H^{r-1}}+\norm{u_U\cdot\grad F_V}_{H^r}\leq C_r\norm{U}_{X^{r+1}}\norm{V}_{X^{r+1}}, \]
and the symmetric terms satisfy the same bound. This proves \eqref{eq:quadratic-tame-bound}.

Writing components explicitly gives
\[ \del_t\omega_W=(\mscr L_kW)_\omega-u_V\cdot\grad\omega_W-u_W\cdot\grad\omega_V-u_W\cdot\grad\omega_W-b_V\cdot\grad F_W-b_W\cdot\grad F_V-b_W\cdot\grad F_W+R_\omega, \]
\[ \del_tF_W=(\mscr L_kW)_F-u_V\cdot\grad F_W-u_W\cdot\grad F_V-u_W\cdot\grad F_W+R_F. \]
For real divergence-free $v$, $\operatorname{Re}\langle v\cdot\grad\partial^\alpha f,\partial^\alpha f\rangle=0$, so the self-transport terms contribute only commutators. Use
\[ \norm{[\partial^\alpha,v\cdot\grad]f}_{L^2}\leq C_s\bigl(\norm{\grad v}_{L^\infty}\norm{f}_{H^{s-1}}+\norm{v}_{H^s}\norm{\grad f}_{L^\infty}\bigr),\qquad |\alpha|=s-1, \]
and its $|\alpha|=s$ analogue. The remaining terms contain either a derivative of $V$ or $b_W$, which is one derivative smoother than $F_W$. Sobolev embedding and boundedness of $\mscr L_k$ give
\[ \frac12\frac{d}{dt}\left(\norm{\omega_W}_{H^{s-1}}^2+\norm{F_W}_{H^s}^2\right)\leq M_s\norm{W}_{X^s}^2+C_s\left(\norm{V}_{X^{s+1}}+\norm{W}_{X^s}\right)\norm{W}_{X^s}^2+\norm{R}_{X^s}\norm{W}_{X^s}. \]
Division by $\norm{W}_{X^s}$ proves \eqref{eq:nonlinear-energy-estimate}; at zero, use the upper right derivative. Applying the estimate to Friedrichs regularizations and then using strong convergence in lower norms and lower semicontinuity extends its integrated form to the strong-solution class.

For local existence, solve the Fourier--Galerkin system $\del_tU_N=J_N\mscr L_kU_N+J_N\mcal B(J_NU_N,J_NU_N)$, where $J_N$ is the $L^2$ orthogonal projection onto the Fourier modes in $[-N,N]^2\cap\Z^2$. The same estimate gives
\[ \frac{d}{dt}\norm{U_N}_{X^s}\leq M_s\norm{U_N}_{X^s}+C_s\norm{U_N}_{X^s}^2. \]
Scalar comparison yields an $N$-independent existence time and $X^s$ bound; the equation bounds $\del_tU_N$ in $X^{s-1}$. The compact embedding $X^s\hookrightarrow X^{s-1}$ and the Arzel\`a--Ascoli theorem give, along a subsequence, strong convergence in $C([0,T];X^{s-1})$ and weak-* convergence in $L^\infty([0,T];X^s)$. These convergences are sufficient to pass to the transport products in $C([0,T];X^{s-2})$, recover the initial value, and construct a solution in $L^\infty([0,T];X^s)\cap W^{1,\infty}([0,T];X^{s-1})$.

We prove uniqueness using an estimate one derivative lower than the preceding $X^s$ error estimate. If $U$ and $Z$ are two solutions and $W=U-Z$, the same commutator calculation at level $s-1$ gives
\begin{equation} \label{eq:lower-difference-estimate}
\frac{d}{dt}\norm{W}_{X^{s-1}}\leq C_s\bigl(1+\norm{U}_{X^s}+\norm{Z}_{X^s}\bigr)\norm{W}_{X^{s-1}}.
\end{equation}
Gronwall's inequality gives uniqueness and therefore convergence of the whole Galerkin sequence.

For continuity of the solution map, the standard Bona--Smith argument now applies: solve from smoothed initial data, use \eqref{eq:lower-difference-estimate} for the low-norm difference, and combine the uniform $X^s$ energy bound with persistence estimates for the smoothed solutions to control the high-frequency tails. This yields strong continuity in $X^s$ and continuous dependence on the initial data in $X^s$. Since the right-hand side of \eqref{eq:nonlinear-perturbation-equation} is continuous from $X^s$ to $X^{s-1}$, the solution then belongs to $C^1([0,T];X^{s-1})$. Restarting the uniform energy estimate gives continuation while the $X^s$ norm stays bounded. Since $J_N$ commutes with the defining symmetries and preserves real-valued fields, the limit remains in the real subspace of $X^s_{\mathrm{sym}}$.
\end{proof}

\begin{theorem} \label{thm:nonlinear-instability}
Let $k\geq2$ and let $s\geq4$ be an integer. Choose a real eigenfunction $U_\ast\in\mcal E_k\cap C^\infty$ satisfying
\[ \mscr L_kU_\ast=\Lambda_kU_\ast,\qquad \norm{U_\ast}_{X^s}=1. \]
There are constants $\delta>0$, $\epsilon_0>0$, and $C>0$, depending on $k$, $s$, and $U_\ast$ but not on $\epsilon$, such that for every $0<\epsilon<\epsilon_0$ the nonlinear perturbation equation has a solution $U^\epsilon\in X^s_{\mathrm{sym}}$ with
\[ U^\epsilon(0)=\epsilon U_\ast, \]
on the interval $[0,T_\epsilon]$, where
\begin{equation} \label{eq:escape-time}
T_\epsilon=\frac{1}{\Lambda_k}\log\frac{\delta}{\epsilon}.
\end{equation}
This solution satisfies
\begin{equation} \label{eq:nonlinear-departure}
\sup_{0\leq t\leq T_\epsilon}\norm{U^\epsilon(t)}_{X^s}\leq C\delta,\qquad \norm{U^\epsilon(T_\epsilon)}_{X^s}\geq\frac{\delta}{2}.
\end{equation}
In particular, the purely magnetic equilibrium is nonlinearly unstable in $X^s_{\mathrm{sym}}$, and an initial perturbation of size $\epsilon$ produces a departure of size independent of $\epsilon$ in time $O(|\log\epsilon|)$.
\end{theorem}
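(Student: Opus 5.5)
We follow Grenier's high-order approximate-trajectory scheme. The inputs are the semigroup bound \eqref{eq:leading-semigroup-bound}, which gives growth at exactly the rate $\Lambda_k$ on every $X^r_{\mathrm{sym}}$, the tame bound \eqref{eq:quadratic-tame-bound}, and the energy estimate \eqref{eq:nonlinear-energy-estimate}.

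\textbf{Approximate solution.} Expand
\[
U^{\mathrm{app}}=\sum_{j=1}^{N}\epsilon^jU_j.
\]
Take $U_1=e^{\Lambda_kt}U_\ast$. For $j\geq2$, define $U_j$ as the solution of
\[
\del_tU_j=\mscr L_kU_j+\sum_{a+b=j}\mcal B(U_a,U_b),\qquad U_j(0)=0,
\]
written in Duhamel form. By induction on $j$, and with a loss of one derivative at each step, we aim for
\[
\norm{U_j(t)}_{X^{r}}\leq C_{j,r}e^{j\Lambda_kt}
\]
for every $r$. The source term has size $e^{j\Lambda_kt}$ in $X^{r}$, since it is bounded by $C\norm{U_a}_{X^{r+1}}\norm{U_b}_{X^{r+1}}$. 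The semigroup contributes only the slower factor $e^{\Lambda_k(t-\tau)}$. Hence the Duhamel integral gives
\[
\int_0^te^{\Lambda_k(t-\tau)}e^{j\Lambda_k\tau}\,d\tau\lesssim e^{j\Lambda_kt},
\]
because $j\Lambda_k>\Lambda_k$. Since $U_\ast$ is smooth, every regularity index is available; we need $U_j\in X^{s+1}$ only up to level $N$. Symmetry is preserved because $\mscr L_k$ and $\mcal B$ are equivariant.

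Set $\tau=\epsilon e^{\Lambda_kt}$. As long as $\tau\leq\delta$ with $\delta$ small, the bounds above give
\[
\norm{U^{\mathrm{app}}-\epsilon U_1}_{X^{s+1}}\leq C\tau^2,\qquad \norm{U^{\mathrm{app}}}_{X^{s+1}}\leq C\tau.
\]
The residual $R=\del_tU^{\mathrm{app}}-\mscr L_kU^{\mathrm{app}}-\mcal B(U^{\mathrm{app}},U^{\mathrm{app}})$ contains only the terms $\epsilon^{a+b}\mcal B(U_a,U_b)$ with $a+b>N$. It therefore satisfies $\norm{R}_{X^s}\leq C\tau^{N+1}$.

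\textbf{Error estimate.} Let $U^\epsilon$ be the local solution from Lemma \ref{lem:nonlinear-energy} with $U^\epsilon(0)=\epsilon U_\ast$, and set $W=U^\epsilon-U^{\mathrm{app}}$, so $W(0)=0$. Then $W$ solves \eqref{eq:error-equation-general} with $V=U^{\mathrm{app}}$ and forcing $-R$. The energy estimate \eqref{eq:nonlinear-energy-estimate} yields
\[
\frac{d}{dt}\norm{W}_{X^s}\leq\bigl(M_s+C\tau+C\norm{W}_{X^s}\bigr)\norm{W}_{X^s}+C\tau^{N+1}.
\]
This is the key difficulty. The energy constant $M_s$ is a crude bound on $\mscr L_k$ and may exceed $\Lambda_k$, so the energy method alone would let $W$ grow faster than $U^{\mathrm{app}}$. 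Grenier's remedy is to choose $N$ so large that $(N+1)\Lambda_k>M_s+1$. Under the bootstrap assumption $\norm{W}_{X^s}\leq\tau^{N}$, which also implies $\norm{W}_{X^s}\leq 1$, Gronwall's inequality gives
\[
\norm{W(t)}_{X^s}\leq C\int_0^te^{(M_s+1)(t-\sigma)}\epsilon^{N+1}e^{(N+1)\Lambda_k\sigma}\,d\sigma\leq C'\tau^{N+1}.
\]
For $\delta$ small, $C'\tau^{N+1}\leq\tfrac12\tau^N$, so the bootstrap closes strictly. By the continuation criterion of Lemma \ref{lem:nonlinear-energy}, the solution therefore exists on all of $[0,T_\epsilon]$. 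Since $W$ is only a strong solution, the energy inequality is used in the integrated form stated in that lemma.

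\textbf{Conclusion.} On $[0,T_\epsilon]$ we have $\tau\leq\delta$, so
\[
\norm{U^\epsilon}_{X^s}\leq C\tau+\tau^N\leq C\delta.
\]
At $t=T_\epsilon$ we have $\tau=\delta$, and with $\norm{U_\ast}_{X^s}=1$,
\[
\norm{U^\epsilon(T_\epsilon)}_{X^s}\geq\delta-C\delta^2-\delta^N\geq\tfrac{\delta}{2}
\]
once $\delta$ is small. We take $\epsilon_0<\delta$ so that $T_\epsilon>0$.
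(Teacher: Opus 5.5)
Your proposal is correct and follows essentially the same route as the paper: Grenier's expansion with $U_1=e^{\Lambda_k t}U_\ast$, profiles bounded by $e^{j\Lambda_k t}$ through the semigroup bound \eqref{eq:leading-semigroup-bound}, and the choice $(N+1)\Lambda_k>M_s+1$ to overcome the crude energy constant. The closing bootstrap on $W$ is also the same, except that you bootstrap the time-dependent bound $\norm{W}_{X^s}\le\tau^N$ where the paper uses a fixed $\delta_0$; this difference is cosmetic. One small fix is needed at $T_\epsilon$: either use the improved bound $\norm{W}_{X^s}\le C'\delta^{N+1}$ rather than $\delta^N$, or require $N\ge2$ as the paper does, because with $N=1$ your estimate $\delta-C\delta^2-\delta^N$ gives no lower bound.
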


\begin{proof}
The gap gives
\[ e^{t\mscr L_k}=e^{\Lambda_kt}P_k+e^{t\mscr L_k}(I-P_k),\qquad \norm{e^{t\mscr L_k}(I-P_k)}\leq C_se^{(\Lambda_k- \frac{\gamma_k}{2})t}, \]
and hence \eqref{eq:leading-semigroup-bound}, even if leading branches tie.

Choose an integer $N\geq2$ so large that
\begin{equation} \label{eq:choice-of-expansion-order}
(N+1)\Lambda_k>M_s + 1.
\end{equation}

To absorb the derivative loss, set $U_1(t)=e^{\Lambda_kt}U_\ast$ and define $U_2,\ldots,U_N$ by
\begin{equation} \label{eq:approximate-profile-recursion}
(\del_t-\mscr L_k)U_j=\sum_{p+q=j}\mcal B(U_p,U_q),\qquad U_j(0)=0.
\end{equation}
Smoothness of $U_\ast$ permits this recursion. With $s_j=s+N-j+1$, induction gives
\begin{equation} \label{eq:profile-growth}
\norm{U_j(t)}_{X^{s_j}}\leq C_j e^{j\Lambda_kt},\qquad 1\leq j\leq N,\quad t\geq0.
\end{equation}
Indeed, for $p+q=j$ one has $s_p,s_q\geq s_j+1$, so \eqref{eq:quadratic-tame-bound} bounds the forcing by $Ce^{j\Lambda_kt}$. Duhamel's formula gives
\[ \norm{U_j(t)}_{X^{s_j}}\leq C\int_0^te^{\Lambda_k(t-\tau)}e^{j\Lambda_k\tau}\,d\tau=C\frac{e^{j\Lambda_kt}-e^{\Lambda_kt}}{(j-1)\Lambda_k}\leq C_je^{j\Lambda_kt}, \]
for $j\geq2$. We now define an approximate unstable trajectory:
\[ U^{\text{app}}(t)=\sum_{j=1}^N\epsilon^jU_j(t),\qquad R^{\text{app}}=(\del_t-\mscr L_k)U^{\text{app}}-\mcal B(U^{\text{app}},U^{\text{app}}),\qquad \theta_\epsilon(t)=\epsilon e^{\Lambda_kt}. \]
The recursion cancels orders $1$ through $N$, leaving
\[ R^{\mathrm{app}}=-\sum_{\substack{1\leq p,q\leq N\\N+1\leq p+q\leq2N}}\epsilon^{p+q}\mcal B(U_p,U_q). \]
For $\theta_\epsilon(t)\leq1$, \eqref{eq:quadratic-tame-bound} and \eqref{eq:profile-growth} yield constants $C_{\mathrm{app}},C_{\mathrm{res}}>0$, independent of $\epsilon$ and $t$, such that
\begin{equation} \label{eq:approximate-solution-bounds}
\norm{U^{\text{app}}(t)}_{X^{s+1}}\leq C_{\mathrm{app}}\epsilon e^{\Lambda_kt},\qquad \norm{R^{\text{app}}(t)}_{X^s}\leq C_{\mathrm{res}}(\epsilon e^{\Lambda_kt})^{N+1}.
\end{equation}

For $W=U^\epsilon-U^{\text{app}}$, one has $W(0)=0$ and
\[ \del_tW=\mscr L_kW+2\mcal B(U^{\text{app}},W)+\mcal B(W,W)-R^{\text{app}}. \]
Set
\[ C_\ast=\frac{C_{\mathrm{res}}}{(N+1)\Lambda_k-M_s - 1}, \]
and choose $0<\delta_0<1$ so small that
\[ C_s(C_{\mathrm{app}}+1)\delta_0\leq1,\qquad C_\ast\delta_0^N\leq\frac12. \]
Let $T_{\max}^\epsilon$ be the maximal existence time of $U^\epsilon$. For $0<\epsilon<\delta_0$, define
\[ T_{\delta_0,\epsilon}=\frac{1}{\Lambda_k}\log\frac{\delta_0}{\epsilon},\qquad \tau_\epsilon=\sup\left\{0<T<\min\{T_{\max}^\epsilon,T_{\delta_0,\epsilon}\}:\sup_{0\leq t\leq T}\norm{W(t)}_{X^s}\leq\delta_0\right\}. \]
For $0\leq t<\tau_\epsilon$, one has $\epsilon e^{\Lambda_kt}\leq\delta_0$ and $\norm{W(t)}_{X^s}\leq\delta_0$. Applying \eqref{eq:nonlinear-energy-estimate} with $V=U^{\text{app}}$ and $R=-R^{\text{app}}$, and then using \eqref{eq:approximate-solution-bounds}, gives
\[ \begin{aligned} \frac{d}{dt}\norm{W}_{X^s}&\leq M_s\norm{W}_{X^s}+C_s\bigl(C_{\mathrm{app}}\epsilon e^{\Lambda_kt}+\norm{W}_{X^s}\bigr)\norm{W}_{X^s}+C_{\mathrm{res}}(\epsilon e^{\Lambda_kt})^{N+1}\\ &\leq (M_s + 1)\norm{W}_{X^s}+C_{\mathrm{res}}\epsilon^{N+1}e^{(N+1)\Lambda_kt}. \end{aligned} \]
Since $W(0)=0$, integration of this differential inequality and \eqref{eq:choice-of-expansion-order} yield, for $0\leq t<\tau_\epsilon$,
\[ \norm{W(t)}_{X^s}\leq C_{\mathrm{res}}\epsilon^{N+1}\int_0^te^{(M_s+1)(t-\sigma)}e^{(N+1)\Lambda_k\sigma}\,d\sigma\leq C_\ast(\epsilon e^{\Lambda_kt})^{N+1}\leq\frac{\delta_0}{2}. \]
This is a strict improvement of the bootstrap assumption. Moreover,
\[ \sup_{0\leq t<\tau_\epsilon}\norm{U^\epsilon(t)}_{X^s}\leq\sup_{0\leq t<\tau_\epsilon}\left(\norm{U^{\text{app}}(t)}_{X^s}+\norm{W(t)}_{X^s}\right)\leq\left(C_{\mathrm{app}}+\frac12\right)\delta_0. \]
Continuity of $W$ rules out termination of the bootstrap by $\norm{W}_{X^s}=\delta_0$, while the last bound and the continuation criterion rule out $T_{\max}^\epsilon\leq T_{\delta_0,\epsilon}$. Consequently $U^\epsilon$ exists on $[0,T_{\delta_0,\epsilon}]$, and
\[ \norm{W(t)}_{X^s}\leq C_\ast(\epsilon e^{\Lambda_kt})^{N+1},\qquad 0\leq t\leq T_{\delta_0,\epsilon}. \]

Choose a fixed $0<\delta\leq\delta_0$ so small that
\[ C\delta^2+C\delta^{N+1}\leq\frac{\delta}{2}, \]
and take $\epsilon_0<\delta$. Then
\[ \norm{U^{\text{app}}(T_\epsilon)-\delta U_\ast}_{X^s}\leq\sum_{j=2}^NC_j\delta^j\leq C\delta^2,\qquad \norm{W(T_\epsilon)}_{X^s}\leq C\delta^{N+1}. \]
Therefore we obtain the lower bound in \eqref{eq:nonlinear-departure}.
\end{proof}

\begin{corollary} \label{cor:robust-simple-instability}
Assume in addition that $\Lambda_k$ is algebraically simple on $X^s_{\mathrm{sym}}$, and normalize the real eigenfunction in Corollary \ref{cor:simple-leading-selection} by $\norm{U_\ast}_{X^s}=1$. Choose an integer $N\geq2$ large enough that $(N+1)\Lambda_k>M_s+1$. For every $R>0$ for which the admissible set below is nonempty, there are constants $\delta,\epsilon_0,C_R>0$ with the following property. If $V\in X^{s+N}_{\mathrm{sym}}$ is real and satisfies
\[ P_k V = U_\ast ,\qquad \norm{V}_{X^{s+N}}\leq R, \]
then for every $0<\epsilon<\epsilon_0$ the solution with $U^\epsilon(0)=\epsilon V$ exists up to $T_\epsilon=\Lambda_k^{-1}\log(\delta/\epsilon)$ and satisfies
\begin{equation} \label{eq:robust-simple-escape}
\norm{U^\epsilon(T_\epsilon)-\delta U_\ast}_{X^s}\leq C_R\left\{\delta\left(\frac{\epsilon}{\delta}\right)^{\gamma_k/(2\Lambda_k)}+\delta^2\right\}.
\end{equation}
After choosing $\delta$ and $\epsilon_0$ sufficiently small, $\norm{U^\epsilon(T_\epsilon)}_{X^s}\geq\delta/2$.
\end{corollary}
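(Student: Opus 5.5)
We follow the proof of Theorem \ref{thm:nonlinear-instability}, replacing the pure eigenmode by the linear evolution of $V$. The approximate trajectory is built with the full linear solution $U_1(t)=e^{t\mscr L_k}V$ as leading profile. By Corollary \ref{cor:simple-leading-selection} with $P_kV=U_\ast$, this equals $e^{\Lambda_kt}U_\ast+e^{t\mscr L_k}(I-P_k)V$. Applying \eqref{eq:complement-semigroup-bound} at regularity $s+N$ with $\eta=\gamma_k/2$ gives
\[ \norm{U_1(t)}_{X^{s+N}}\leq C_Re^{\Lambda_kt},\qquad \norm{U_1(t)-e^{\Lambda_kt}U_\ast}_{X^{s+N}}\leq C_Re^{(\Lambda_k-\gamma_k/2)t}. \]
The constants depend only on $R$ because $\norm{(I-P_k)V}_{X^{s+N}}\leq(1+\norm{P_k}_{X^{s+N}})R$.

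Next we define $U_2,\dots,U_N$ by \eqref{eq:approximate-profile-recursion} with zero data. Here the finite regularity $V\in X^{s+N}$ matters. With $s_j=s+N-j+1$ we have $s_1=s+N$, so the induction behind \eqref{eq:profile-growth} runs exactly as before. It uses \eqref{eq:quadratic-tame-bound} and the semigroup bound \eqref{eq:leading-semigroup-bound} at each level $s_j$, and gives $\norm{U_j(t)}_{X^{s_j}}\leq C_{j,R}e^{j\Lambda_kt}$. Since $s_N=s+1$, the bounds \eqref{eq:approximate-solution-bounds} hold with $R$-dependent constants, and the residual is $O((\epsilon e^{\Lambda_kt})^{N+1})$ in $X^s$.

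The bootstrap for $W=U^\epsilon-U^{\mathrm{app}}$ is then copied verbatim using Lemma \ref{lem:nonlinear-energy}. The only care needed is that $V$, and hence the profiles, have limited smoothness. We therefore apply the energy estimate in its integrated form for strong solutions, which is justified because the profiles lie in $C([0,\infty);X^{s+1})$ and $W$ is a strong $X^s$ solution. This gives existence up to $T_{\delta_0,\epsilon}$ and $\norm{W(t)}_{X^s}\leq C_\ast(\epsilon e^{\Lambda_kt})^{N+1}$, with $\delta_0$ depending on $R$. The smallness conditions on $\delta_0$ hold uniformly over the admissible set, since they involve only $C_{\mathrm{app}}$ and $C_{\mathrm{res}}$, which depend on $R$.

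At $t=T_\epsilon$, where $\epsilon e^{\Lambda_kT_\epsilon}=\delta$, we split the error as
\[ U^\epsilon(T_\epsilon)-\delta U_\ast=\epsilon\bigl(U_1(T_\epsilon)-e^{\Lambda_kT_\epsilon}U_\ast\bigr)+\sum_{j\geq2}\epsilon^jU_j(T_\epsilon)+W(T_\epsilon). \]
\begin{itemize}
\item The first term is bounded by $C_R\,\epsilon e^{(\Lambda_k-\gamma_k/2)T_\epsilon}=C_R\,\delta(\epsilon/\delta)^{\gamma_k/(2\Lambda_k)}$, because $e^{-\gamma_kT_\epsilon/2}=(\epsilon/\delta)^{\gamma_k/(2\Lambda_k)}$.
\item The middle sum is at most $C_R\delta^2$, since $\epsilon^jU_j(T_\epsilon)$ is bounded by $C_{j,R}\delta^j$ and $\delta\leq1$.
\item The last term is at most $C\delta^{N+1}\leq C\delta^2$.
\end{itemize}
This proves \eqref{eq:robust-simple-escape}. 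The final lower bound follows from the reverse triangle inequality and $\norm{U_\ast}_{X^s}=1$. We first take $\delta$ small so that $C_R\delta^2\leq\delta/8$, and then take $\epsilon_0$ small so that $C_R(\epsilon/\delta)^{\gamma_k/(2\Lambda_k)}\leq1/8$. This yields $\norm{U^\epsilon(T_\epsilon)}_{X^s}\geq\delta-\delta/4\geq\delta/2$.

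The main obstacle is bookkeeping rather than new analysis. All constants must depend only on $R$, and the loss of one derivative per order must fit inside the $N$ extra derivatives of $V$. This is why we use the bound in $X^{s+N}$ rather than smoothness of $U_\ast$.
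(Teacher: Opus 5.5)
Your proposal is correct and follows the paper's proof essentially step for step. You use the leading profile $U_1(t)=e^{t\mscr L_k}V$ split through $P_kV=U_\ast$, the profile hierarchy with $s_j=s+N-j+1$ absorbing the $N$ extra derivatives of $V$, the same bootstrap with $R$-dependent constants, and the three-term decomposition at $T_\epsilon$ via $e^{-\gamma_kT_\epsilon/2}=(\epsilon/\delta)^{\gamma_k/(2\Lambda_k)}$; your note that the energy estimate must be used in its integrated form, because $U^{\mathrm{app}}$ is only finitely smooth here, makes explicit a point the paper leaves implicit.
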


\begin{proof}
Use the same $N$, set $U_1(t)=e^{t\mscr L_k}V$, and define the higher profiles by \eqref{eq:approximate-profile-recursion}. Since $P_kV=U_\ast$,
\[ U_1(t)=e^{\Lambda_kt}U_\ast+e^{t\mscr L_k}(I-P_k)V,\qquad \norm{U_1(t)}_{X^{s+N}}\leq C_Re^{\Lambda_kt}. \]
The preceding profile and error estimates are uniform for $\norm{V}_{X^{s+N}}\leq R$:
\[ \norm{U_j(t)}_{X^{s+N-j+1}}\leq C_Re^{j\Lambda_kt},\qquad \norm{W(t)}_{X^s}\leq C_R\bigl(\epsilon e^{\Lambda_kt}\bigr)^{N+1}. \]
After decreasing $\delta$ in terms of $R$, the same bootstrap extends the solution through $T_\epsilon$. At that time,
\[ \norm{\epsilon e^{T_\epsilon\mscr L_k}(I-P_k)V}_{X^s}\leq C_R\epsilon e^{(\Lambda_k-\gamma_k/2)T_\epsilon}=C_R\delta\left(\frac{\epsilon}{\delta}\right)^{\gamma_k/(2\Lambda_k)}. \]
The remaining profiles and the exact error satisfy
\[ \norm{\sum_{j=2}^N\epsilon^jU_j(T_\epsilon)+W(T_\epsilon)}_{X^s}\leq C_R\sum_{j=2}^N\delta^j+C_R\delta^{N+1}\leq C_R\delta^2. \]
These estimates prove \eqref{eq:robust-simple-escape}. Choose first $\delta$ and then $\epsilon_0$ so that the two terms on its right-hand side are at most $\delta/4$; the lower bound follows.
\end{proof}

\begin{remark}[Quadratic cancellation for even perturbations]
Let $X^s_{\mathrm{ev}}$ and $X^s_{\mathrm{od}}$ denote the subspaces of $X^s_{\mathrm{sym}}$ consisting, respectively, of pairs whose components are both even or both odd in $x_1$, and let $\Pi_k^u$ be the Riesz projection onto the unstable subspace. From the definition \eqref{eq:symmetric-bilinear-map},
\[ \mcal B(X^{s+1}_{\mathrm{ev}},X^{s+1}_{\mathrm{ev}})\subset X^s_{\mathrm{od}},\qquad \mcal B(X^{s+1}_{\mathrm{ev}},X^{s+1}_{\mathrm{od}})\subset X^s_{\mathrm{ev}}. \]
By Lemma \ref{lem:sine-chain-stability}, every unstable eigenspace in $X^s_{\mathrm{sym}}$ is even, whereas the odd sector is invariant and spectrally non-growing. Hence $\Pi_k^u\mcal B(U,U)=0$ for every even $U$, in particular for every $U_\ast\in\mcal E_k$. If $G_2=\mcal B(U_\ast,U_\ast)$, the second profile in the expansion is
\[ U_2(t)=e^{2\Lambda_kt}V_2-e^{t\mscr L_k}V_2,\qquad V_2=(2\Lambda_k-\mscr L_k)^{-1}G_2. \]
Both $V_2$ and $U_2(t)$ remain odd, so $\Pi_k^uU_2(t)=0$. Thus the second profile generated by a leading eigenfunction has zero unstable projection and satisfies $\epsilon^2U_2=O((\epsilon e^{\Lambda_kt})^2)$ in $X^s$.
\end{remark}

\subsection*{Acknowledgments}
The author is grateful to Professor Constantin for suggesting this problem and for valuable discussions. He also thanks Zhongtian Hu for a helpful discussion.

\raggedright

\end{document}